\documentclass[
a4paper,reqno]{amsart}
\usepackage[T1]{fontenc}
\usepackage[english]{babel}
\usepackage{amssymb,bbm,enumerate}
\usepackage{color}
\usepackage[linktocpage=true,colorlinks=true, linkcolor=blue, citecolor=red, urlcolor=green]{hyperref}
\usepackage[all]{xy}
\usepackage{tikz-cd}

\usepackage{float}
\restylefloat{table}

\usepackage{tikz}

\theoremstyle{plain}
\newtheorem{theorem}{Theorem}[section]
\newtheorem{lemma}[theorem]{Lemma}
\newtheorem{proposition}[theorem]{Proposition}
\newtheorem{corollary}[theorem]{Corollary}

\theoremstyle{definition}
\newtheorem{definition}[theorem]{Definition}
\newtheorem{example}[theorem]{Example}

\theoremstyle{remark}
\newtheorem{remark}[theorem]{Remark}

\numberwithin{equation}{section}

\definecolor{light}{gray}{.9}

\begin{document}

\title[Affine Rota--Baxter groups and affine skew braces]{Affine Rota--Baxter groups and affine skew braces}

\author{Jiayao Ma}
\address{School of Mathematics and Statistics, Northeast Normal University, Changchun, 130024, China}
\email{jiayaoma@nenu.edu.cn}

\author{Boran Zhang}
\address{School of Mathematical Sciences, Peking University, Beijing, 100871, China}
\email{zhangboran@stu.pku.edu.cn}

\author{Jiefeng Liu}
\address{School of Mathematics and Statistics, Northeast Normal University, Changchun, 130024, China}
\email{liujf534@nenu.edu.cn}



\begin{abstract}
	Rota--Baxter groups and skew braces are closely related algebraic structures, both providing set-theoretical solutions to the Yang--Baxter equation. In this paper, we extend these structures to the setting of affine schemes. First, we introduce affine Rota--Baxter groups and, by leveraging the duality between affine groups and Hopf algebras via their coordinate rings, prove the equivalence between affine Rota–Baxter groups and co-Rota–Baxter Hopf algebras. Next, we show affine Rota--Baxter groups can naturally give rise to the affine skew braces defined by Angiono, Galindo, and Vendramin. Conversely, any affine skew brace can be embedded into an affine Rota--Baxter group.
    By linking these to the relationship between affine skew braces and Hopf co-braces, we give new connections between co-Rota--Baxter Hopf algebras and Hopf co-braces. Finally, we propose the study of solutions to the Yang–Baxter equation within the framework of affine schemes, demonstrating that affine skew braces naturally give rise to such solutions.
\end{abstract}

\keywords{
Yang--Baxter equation,	affine  skew brace, affine Rota--Baxter group, Hopf algebra
}

\maketitle

\section{Introduction}
The Yang–Baxter equation (YBE) first appeared in 1967 in Yang's work on the quantum many‑body problem \cite{Yan67}. Independently, Baxter derived the same equation in 1972 while studying the eight‑vertex model \cite{Bax72}. Since then, the YBE has become one of the fundamental equations in mathematical physics.  A major development came when Drinfel'd \cite{Dri92} proposed the systematic study of set‑theoretic solutions, which has since evolved into an active research area. Recall that a set-theoretical solution on a set $X$ is a bijective map $r: X \times X \to X \times X$ satisfying the braid‑type relation
\[
 (r \times \operatorname{id})(\operatorname{id} \times r)(r \times \operatorname{id}) = (\operatorname{id} \times r)(r \times \operatorname{id})(\operatorname{id} \times r).
\]
A central problem in this area is the construction and classification of solutions. The pioneer works on  set-theoretical  solutions were made by Etingof--Schedler--Soloviev \cite{ESS99} and Lu--Yan--Zhu \cite{LYZ00}. 

Rump introduced the notion of braces as a generalization of radical rings, which give rise to involutive set-theoretical solutions \cite{Rump06, Rump07}. This concept was later reformulated by Cedó, Jespers and Okniński \cite{CJO} and subsequently generalized by Guarnieri and Vendramin to skew braces \cite{GV17}, which can be used to construct non‑involutive set‑theoretic solutions. The notion of Hopf braces was introduced by Angiono, Galindo, and Vendramin in \cite{AGV17} as an algebraic framework motivated by the Yang--Baxter equation, encompassing both Rump’s original braces and their non-commutative extensions \cite{GV17} as special cases. Moreover, they developed the concept of affine (skew) braces, thereby lifting brace theory to the category of affine schemes, and further established an equivalence between affine skew braces and commutative Hopf co-braces, where the latter are exactly the finite duals of Hopf braces. 

 Guo, Lang and Sheng \cite{GLS21} first introduced the notion of Rota--Baxter (Lie) groups in their study of the integration of Rota--Baxter Lie algebras. This notion was later further explored by Bardakov and Gubarev \cite{BG22}, who revealed a deep connection between Rota–Baxter groups and skew braces. In particular, it has been shown that every Rota--Baxter group gives rise to a skew brace, and conversely, very skew brace can be embedded into a Rota--Baxter group. For more details and references on Rota--Baxter groups, we refer the reader to see \cite{BG23, DR23, GG23, J25}. In \cite{Gon}, Goncharov introduces the notion of a Rota--Baxter operator on a cocommutative Hopf algebra; a cocommutative Hopf algebra endowed with such an operator is called a Rota--Baxter Hopf algebra. Moreover, he proves that every Rota--Baxter group admits a unique extension to a Rota--Baxter Hopf algebra on its group algebra, and that Rota--Baxter Lie algebras are in bijective correspondence with Rota--Baxter Hopf algebra structures on their universal enveloping algebras. In \cite{ZZL24}, Zheng, Zhao and Liu introduce the notion of  a Rota--Baxter co-operator on a commutative Hopf algebra as a dual of Rota--Baxter operator on a cocommutative Hopf algebra; a commutative Hopf algebra endowed with such an operator is called a co-Rota--Baxter Hopf algebra. The relationships between Hopf co-braces and co-Rota--Baxter Hopf algebra are also analyzed.  

Inspired by the known connections between Rota--Baxter groups and skew braces, this paper introduces affine (algebraic) Rota--Baxter groups, lifting both structures and their subtle interactions from the set-theoretic setting to the framework of affine groups. 

We first establish a categorical equivalence between affine Rota--Baxter groups and co-Rota--Baxter Hopf algebras. We then prove that every affine Rota--Baxter group naturally gives rise to an affine skew brace, and conversely, that every affine skew brace embeds into an affine Rota--Baxter group. Utilizing the established equivalence between affine skew braces and commutative Hopf co-braces, we provide a conceptual, computation-free proof of the connection between co-Rota--Baxter Hopf algebras and Hopf co-braces.

Finally, we generalize set-theoretic solutions of the Yang--Baxter equation to affine schemes, expressing solutions as natural transformations instead of single maps. Crucially, we show that every affine skew brace naturally yields such a solution. The central constructions and results are summarized in the diagram below.
\begin{figure}[htbp]
\centering
\begin{tikzcd}[column sep= large]
 \substack{\text{Rota--Baxter} \\ \text{group objects}} \ar[r,dashed] & \substack{\text{affine} \\ \text{Rota--Baxter groups}} \ar[r, "\text{Thm \ref{coRBHopf = AffRBgroup category equivalence}}", Leftrightarrow] \ar[d,shift left, "\text{Prop \ref{affine RB to affine skew brace prop}}"] & \substack{\text{co-Rota--Baxter} \\ 
 \text{Hopf algebras}} \ar[d, shift left, "\text{Prop \ref{coRBHopf give rise to Hopf cobrace}}"] \\
 \substack{\text{skew brace} \\ \text{objects}} \ar[r,dashed] & \text{affine skew braces} \ar[d, "\text{Prop \ref{prop:affine-skew-brace-to-solution}}"] \ar[r, "\text{Thm \ref{affine skewbrace Hopf co-brace}}", Leftrightarrow] \ar[u,shift left, "\text{Thm \ref{thm:affine-skew-brace-embedding}}"] & \text{Hopf co-braces} \ar[u,shift left, "\text{Prop \ref{new relation from Hopfcobrace to coRB}}"] \\
  &\substack{\text{affine scheme solutions} \\ \text{to the YBE}}  &
\end{tikzcd}
\end{figure}

The paper is organized as follows.  In Section \ref{sec:pre}, we first recall the notions of Rota--Baxter groups,  (skew) braces, and  their relations. Then we recall the concepts of affine groups and Hopf algebra structures on their corresponding coordinate rings. In Section \ref{sec:affine}, we introduce the notion of an affine Rota--Baxter group and present several explicit constructions. We then establish an equivalence between affine Rota--Baxter groups and co-Rota--Baxter Hopf algebras. In Section \ref{sec:brace}, we first recall the notion of an affine (skew) brace and give three equivalent descriptions. Then we recall the equivalence between affine skew braces and commutative Hopf co-braces. In Section \ref{sec:correspond}, we explore the relationship between affine Rota--Baxter  groups and affine skew braces. Specifically, we prove that every affine Rota--Baxter  group gives rise to an affine skew brace, and conversely, that every affine skew brace can be embedded into some affine Rota--Baxter  group. We then prove new connections between co-Rota--Baxter Hopf algebras and Hopf co-braces. In Section \ref{sec:YB}, we introduce the Yang--Baxter equation in the setting of affine schemes, and prove that every affine skew brace gives such a solution in a natural way.

Throughout this paper, for simplicity, we assume that $k$  is a field; however, many of the results remain valid if $k$
is only assumed to be a commutative ring. All \(k\)-algebras are assumed to be commutative and unital, and all algebra homomorphisms are unital.

\section{Preliminaries}\label{sec:pre}
\subsection{Rota--Baxter groups and (skew) braces}
We begin by recalling the definitions of Rota--Baxter groups, and (skew) braces, as well as their relations.

\begin{definition}{\rm (\cite{GLS21})}
A map \(B: G \to G\) on a group \(G\) is called a \textbf{Rota--Baxter operator} if
\begin{equation}
\label{eq:RB-operator}
    B(g)B(h) = B(gB(g)hB(g)^{-1}) \qquad \forall g,h \in G.
\end{equation}
A \textbf{Rota--Baxter group} is a pair \((G, B)\) consisting of a group \(G\) and a Rota--Baxter operator \(B: G \to G\). We will often refer to a Rota--Baxter operator simply as an \textbf{RB-operator}, and a Rota--Baxter group simply as an \textbf{RB group}.
\end{definition}

A {\bf Rota--Baxter group homomorphism} from $(G,B)$ to $(G', B')$ is a group homomorphism $\varphi$ satisfying $\varphi \circ B = B' \circ \varphi$. Moreover, RB groups together with their homomorphisms form the category of RB groups, denoted $\mathsf{RBGrp}$.

\begin{example}{\rm (\cite{GLS21})}
\label{prelim examples}
Let \( G \) be a group. Then
\begin{enumerate}
    \item the map \( B_0(g) = e \) is an RB-operator on \( G \);
    \item the map \( B_{-1}(g) = g^{-1} \) is an RB-operator on \( G \);
    \item given an exact factorization \( G = HL \), the map \( B : G \to G \) defined by \( B(h\ell) = \ell^{-1} \), where $h\in H$ and $\ell \in L$, is an RB operator on \( G \).
\end{enumerate}
\end{example}

\begin{proposition}{\rm (\cite{GLS21})}
\label{G oB group}
    Let $(G,B)$ be an {\rm RB} group.
    \begin{enumerate}
    \item[{\rm (1)}] \label{Item(1) of Prop2.3} The pair $(G, \circ_B)$, with the multiplication
    \begin{equation}
        g \circ_B h = g B(g) h B(g)^{-1}, \quad g, h \in G
    \end{equation}
   is also a group. The group $G_B :=(G,\circ_B)$ is called the {\bf descendant group} of the {\rm RB} group $(G,B)$.

    \item[{\rm (2)}] \label{Item(2) of Prop2.3} The operator $B$ is an {\rm RB} operator on the group $(G, \circ_B)$.

    \item[{\rm (3)}] \label{Item(3) of Prop2.3} The map $B: (G, \circ_B) \rightarrow (G, \cdot)$ is a homomorphism of {\rm RB}  groups.
\end{enumerate}
\end{proposition}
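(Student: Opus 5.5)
We verify the three items directly from the Rota--Baxter identity \eqref{eq:RB-operator}. The key observation, used throughout, is that \eqref{eq:RB-operator} can be rewritten as
\[
B(g)B(h) = B(g\circ_B h), \qquad g,h\in G .
\]
This is already item (3), provided $(G,\circ_B)$ is a group and $B$ is an RB-operator on it.

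For item (1), we check the group axioms in turn.

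\emph{Identity.} From \eqref{eq:RB-operator} with $g=h=e$ we get $B(e)^2=B(e\,B(e)\,e\,B(e)^{-1})=B(e)$, so $B(e)=e$. Hence $e\circ_B h = h$ and $g\circ_B e = g$, and $e$ is a two-sided identity.

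\emph{Associativity.} We compute both sides:
\[
(g\circ_B h)\circ_B k = g B(g) h B(g)^{-1}\, B(g\circ_B h)\, k\, B(g\circ_B h)^{-1}.
\]
Substituting $B(g\circ_B h)=B(g)B(h)$, this becomes $gB(g)\,hB(h)kB(h)^{-1}\,B(g)^{-1}$. This equals $g\circ_B(h\circ_B k)=gB(g)(h\circ_B k)B(g)^{-1}$.

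\emph{Inverses.} We propose $\bar g := B(g)^{-1}g^{-1}B(g)$. Then
\[
g\circ_B \bar g = gB(g)B(g)^{-1}g^{-1}B(g)B(g)^{-1}=e,
\]
so $\bar g$ is a right inverse. A left inverse follows formally: in a monoid in which every element has a right inverse, right inverses are two-sided. So $(G,\circ_B)$ is a group.

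For item (2), we must show that
\[
B(g)\circ_B B(h) = B\bigl(g\circ_B B(g)\circ_B h\circ_B \bar{B(g)}\bigr),
\]
where $\bar{x}$ denotes the $\circ_B$-inverse. Since $B$ converts $\circ_B$ into $\cdot$, the right-hand side equals
\[
B(g)\,B(B(g))\,B(h)\,B(B(g))^{-1},
\]
using $B(\bar x)=B(x)^{-1}$, which holds because $B$ is multiplicative and $B(e)=e$. The left-hand side is, by definition, $B(g)\,B(B(g))\,B(h)\,B(B(g))^{-1}$, so the two sides agree.

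Item (3) then follows: $B$ is multiplicative from $(G,\circ_B)$ to $(G,\cdot)$, and it commutes with the RB operators, since both are $B$.

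The only point requiring care is the associativity computation, where the RB identity must be applied exactly once, inside the conjugating factor. Everything else is bookkeeping.
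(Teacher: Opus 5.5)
Your proof is correct and complete. The paper does not prove this statement itself but cites \cite{GLS21}, and your argument is the standard direct verification found there: you rewrite the Rota--Baxter identity as $B(g\circ_B h)=B(g)B(h)$, which also yields $B(e)=e$ and $B(\bar g)=B(g)^{-1}$. The only cosmetic difference is that you obtain two-sidedness of $\bar g$ from the monoid lemma instead of checking $\bar g\circ_B g=e$ directly; both are equally valid.
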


\begin{remark}
\label{o^B remark}
Let \(e\) be the identity of the RB group \((G,B)\). One checks that \(B(e) = e\), and that \(e\) is also the identity with respect to  the new operation \(\circ_B\). The inverse of \(g\) with respect to \(\circ_B\) is $B(g)^{-1} g^{-1} B(g)$.
\end{remark}

Then we recall the notion of a (skew) brace, which is closely related to RB groups.

\begin{definition}{\rm (\cite{CJO,GV17})}
A \textbf{skew left brace} is a triple \((A, \cdot, \circ)\) where \((A, \cdot)\) and \((A, \circ)\) are groups such that
\begin{equation}
a \circ (b \cdot c) = (a \circ b) \cdot a^{-1} \cdot (a \circ c) \qquad \forall a,b,c \in A,
\end{equation}
where \(a^{-1}\) denotes the inverse of \(a\) in \((A,\cdot)\). If in addition \((A,\cdot)\) is abelian, then \((A,\cdot,\circ)\) is called a \textbf{left brace}.
\end{definition}

The notion of (skew) right braces can be defined analogously, and there exists a one-to-one correspondence between (skew) left braces and (skew) right braces. Henceforth, we will simply refer to (skew) left braces as (skew) braces unless otherwise stated.

A {\bf homomorphism of (skew) braces} from \((A,\cdot_A, \circ_A) \) to \((A',\cdot_{A'}, \circ_{A'}) \) is a map
\( \psi : A \to A' \) such that, for all \( a, b \in A \), $\psi(a \cdot_A b) = \psi(a) \cdot_{A'} \psi(b)$ and $\psi (a \circ_A b) = \psi(a) \circ_{A'} \psi(b)$. Thus, the braces (resp. skew braces) and their homomorphisms form the category of braces \(\mathsf{Br}\) (resp. skew braces \(\mathsf{SBr}\)).

\begin{lemma}{\rm (\cite{GV17})}
	\label{skew brace lemma}
	Let \((A, \cdot, \circ)\) be a skew brace. Then \(e = e_0\), where \(e\) denotes the identity of \((A, \cdot)\) and \(e_0\) denotes the identity of \((A, \circ)\).
\end{lemma}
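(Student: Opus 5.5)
The plan is to specialize the skew brace identity at a convenient triple and cancel in the group $(A,\cdot)$. Take arbitrary $a\in A$ and set $b=c=e$, where $e$ is the identity of $(A,\cdot)$. Since $e\cdot e=e$, the left-hand side is $a\circ(e\cdot e)=a\circ e$. The right-hand side is $(a\circ e)\cdot a^{-1}\cdot(a\circ e)$. Hence
\[
a\circ e=(a\circ e)\cdot a^{-1}\cdot(a\circ e).
\]

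Now cancel on the left in the group $(A,\cdot)$: multiplying by $(a\circ e)^{-1}$ (the $\cdot$-inverse) gives $e=a^{-1}\cdot(a\circ e)$. Therefore $a\circ e=a$ for every $a\in A$.

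Finally choose $a=e_0$. Then $e_0\circ e=e_0$. On the other hand, $e_0$ is the identity of $(A,\circ)$, so $e_0\circ e=e$. Comparing the two gives $e=e_0$.

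There is no genuine obstacle here. The only point needing care is to keep the two inverses apart: every inverse used above is taken in $(A,\cdot)$, consistent with the convention in the definition of a skew left brace.
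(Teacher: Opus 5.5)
Your proof is correct: specializing the brace identity at $b=c=e$ to get $a\circ e=a$ for all $a$, and then taking $a=e_0$, is essentially the standard argument of Guarnieri and Vendramin. The paper does not prove this lemma itself but only cites their work for it.
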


\begin{proposition}{\rm (\cite{GV17})}\label{lamda from skewbrace remark}
	Let \((G, \cdot, \circ)\) be a skew  brace. For each \(a \in G\), define a map
	\[
	\lambda_a: G \to G, \quad \lambda_a(b) = a^{-1} \cdot (a \circ b).
	\]
Then we have
	\begin{enumerate}
		 \item[{\rm (1)}]  For each \(a \in G\), \(\lambda_a\) is an automorphism of \((G, \cdot)\).
		 \item[{\rm (2)}]  The map \(\lambda: (G, \circ) \to \operatorname{Aut}(G, \cdot)\), \(a \mapsto \lambda_a\), is a group homomorphism.
		 \item[{\rm (3)}]  The inverse of \(a\) with respect to \(\circ\) is given by \(a^{\circ(-1)} = \lambda_a^{-1}(a^{-1})\).
	\end{enumerate}
\end{proposition}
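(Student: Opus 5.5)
The three claims follow from the skew brace identity $a \circ (b \cdot c) = (a \circ b) \cdot a^{-1} \cdot (a \circ c)$ together with Lemma \ref{skew brace lemma}, which says the identities $e$ of $(G,\cdot)$ and $(G,\circ)$ coincide.

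For (1), we first show $\lambda_a$ is multiplicative. Multiplying the brace identity on the left by $a^{-1}$ gives
\[
\lambda_a(b\cdot c)=a^{-1}\cdot(a\circ b)\cdot a^{-1}\cdot(a\circ c)=\lambda_a(b)\cdot\lambda_a(c).
\]
Next, $\lambda_a$ is bijective, since it is the composite of the bijection $b\mapsto a\circ b$ (left $\circ$-translation in a group) with the bijection $x\mapsto a^{-1}\cdot x$ (left $\cdot$-translation). Hence $\lambda_a\in\operatorname{Aut}(G,\cdot)$.

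For (2), rewrite the definition as $a\circ b=a\cdot\lambda_a(b)$. Then
\[
\lambda_{a\circ b}(c)=(a\circ b)^{-1}\cdot\bigl((a\circ b)\circ c\bigr)=(a\circ b)^{-1}\cdot\bigl(a\circ(b\circ c)\bigr).
\]
Expanding both $\circ$-products through $\lambda_a$ gives
\[
(a\circ b)^{-1}\cdot\bigl(a\circ(b\circ c)\bigr)=\lambda_a(b)^{-1}\cdot a^{-1}\cdot a\cdot\lambda_a(b\circ c)=\lambda_a(b)^{-1}\cdot\lambda_a\bigl(b\cdot\lambda_b(c)\bigr).
\]
By (1), $\lambda_a(b\cdot\lambda_b(c))=\lambda_a(b)\cdot\lambda_a(\lambda_b(c))$, so the expression collapses to $\lambda_a\lambda_b(c)$. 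Thus $\lambda_{a\circ b}=\lambda_a\lambda_b$, and $\lambda$ is a homomorphism of groups.

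For (3), put $x=\lambda_a^{-1}(a^{-1})$, which is meaningful because $\lambda_a$ is invertible by (1). Then
\[
a\circ x=a\cdot\lambda_a(x)=a\cdot a^{-1}=e,
\]
and $e$ is also the $\circ$-identity by Lemma \ref{skew brace lemma}. So $x$ is a right $\circ$-inverse of $a$, hence the two-sided inverse $a^{\circ(-1)}$, since $(G,\circ)$ is a group.

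The only delicate point is the computation in (2), specifically rewriting $\circ$ through $\lambda$ in the right order; everything else is immediate.
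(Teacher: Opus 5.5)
Your proof is correct: the multiplicativity computation in (1), the identity $\lambda_{a\circ b}=\lambda_a\lambda_b$ in (2), and the right-inverse argument in (3) (using that $e$ is the common identity) all check out. The paper gives no proof of its own here and simply cites \cite{GV17}; your argument is essentially the standard direct computation from that source.
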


\begin{example}{\rm (\cite{GV17})}
(1) Let \((G, \cdot)\) be a group. Then \((G, \cdot, \cdot)\) is a (trivial) skew brace.
	
(2) Let \(A\) and \(B\) be groups and let \(\alpha: A \to \operatorname{Aut}(B)\) be a group homomorphism. Then the direct product \(A \times B\) becomes a skew brace with the operations
	\[
	(a, b) \cdot (a', b') = (aa', bb'), \qquad
	(a, b) \circ (a', b') = (aa', \, b \alpha_a(b')),
	\]
	for all \(a, a' \in A\) and \(b, b' \in B\).
\end{example}

We now recall the relationship between RB groups and skew braces.

\begin{proposition}{\rm (\cite{BG22})}
\label{RBgroup to skew brace  proposition}
Let \((G, \cdot,B)\) be an {\rm RB} group. Put
\begin{equation}
    a \circ_B b = a \cdot B(a) \cdot b \cdot B(a)^{-1}.
\end{equation}
Then \((G, \cdot, \circ_B)\) is a skew brace.
\end{proposition}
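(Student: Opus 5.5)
By Proposition \ref{G oB group}(1), the pair $(G,\circ_B)$ is already a group, and $(G,\cdot)$ is a group by assumption. So the only thing to verify is the skew brace compatibility
\[
a \circ_B (b \cdot c) = (a \circ_B b)\cdot a^{-1} \cdot (a \circ_B c), \qquad a,b,c\in G .
\]
The plan is a direct computation from the definition $a\circ_B b = a B(a) b B(a)^{-1}$. A convenient way to organize it is through the map $\lambda_a(b) = a^{-1}(a\circ_B b) = B(a)\, b\, B(a)^{-1}$. For fixed $a$, this is simply conjugation by $B(a)$ in $(G,\cdot)$, so it is an automorphism of $(G,\cdot)$.

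With this notation we have $a\circ_B x = a\,\lambda_a(x)$ for all $x$. Expanding the left-hand side gives
\[
a\circ_B(bc) = a\,\lambda_a(bc) = a\,\lambda_a(b)\,\lambda_a(c).
\]
Expanding the right-hand side gives
\[
a\lambda_a(b)\,a^{-1}\,a\lambda_a(c) = a\,\lambda_a(b)\,\lambda_a(c).
\]
The two sides agree, which proves the compatibility identity.

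I do not expect a real obstacle here. The only nontrivial input is that $\circ_B$ is associative and admits inverses, and this is exactly where the Rota--Baxter identity \eqref{eq:RB-operator} is used. That input is already packaged in Proposition \ref{G oB group}(1), with the explicit $\circ_B$-inverse given in Remark \ref{o^B remark}. If I had to reprove it, the step I would check is associativity. The identity \eqref{eq:RB-operator} says $B(a\circ_B b) = B(a)B(b)$. Using it, $(a\circ_B b)\circ_B c$ becomes $a B(a) b B(a)^{-1} B(a)B(b)\, c\, B(b)^{-1}B(a)^{-1}$, and this simplifies to $a\circ_B(b\circ_B c)$.
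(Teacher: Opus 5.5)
Your proposal is correct and is essentially the standard argument of \cite{BG22}; the paper cites that result and does not prove it. The group axioms for $\circ_B$ come from Proposition~\ref{G oB group}(1), and compatibility reduces to $\lambda_a$ being conjugation by $B(a)$, so $a\circ_B(bc)=aB(a)bcB(a)^{-1}=(a\circ_B b)\,a^{-1}(a\circ_B c)$; your associativity check via $B(a\circ_B b)=B(a)B(b)$ is also right.
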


Conversely, let \((G, \cdot, \circ)\) be a skew brace. Consider the semi-direct product group \(\tilde{G} = (G, \circ) \ltimes (G, \cdot)\) with the operation 
$$(a, b) * (c, d) = (a \circ c, b\lambda_a(d)).$$
Then \(H = \{(g, g) \mid g \in G\}\) and \(L = \{(g, e) \mid g \in G\}\) are subgroups of $\tilde{G}$. Since
$(a, b) = (b, b) * (b^{\circ(-1)} \circ a, e)$, we have an exact factorization \(\tilde{G} = H * L\).
Define an RB-operator \(B\) on \(g = h * l \in \tilde{G}\) as \(B(h * l) = l^{*(-1)}\), namely, $B((a, b)) = (a^{\circ(-1)} \circ b, e).$ Define a map $\psi:G\rightarrow \tilde{G}$ by $\psi(g)=(e,g)$.  One can check that \(\psi\) is an isomorphism of skew  braces \(G\) and \(\operatorname{Im}(\psi)\), where \(\operatorname{Im}(\psi)\) is considered as a skew subbrace of \(\tilde{G}(B) = (\tilde{G}, *, \circ_B)\). Thus we obtain

\begin{theorem}{\rm (\cite{BG22})}
\label{embed skew brace into RBG Thm}
Every skew brace can be embedded into an {\rm RB} group.
\end{theorem}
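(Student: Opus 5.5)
We will prove the statement by checking directly the construction sketched just before it. Let $(G,\cdot,\circ)$ be a skew brace and $\tilde G=(G,\circ)\ltimes(G,\cdot)$ with $(a,b)*(c,d)=(a\circ c,\,b\lambda_a(d))$. By Proposition~\ref{lamda from skewbrace remark}, $\lambda:(G,\circ)\to\operatorname{Aut}(G,\cdot)$ is a homomorphism, so $\tilde G$ is a group. Its identity is $(e,e)$, using Lemma~\ref{skew brace lemma} to identify the two identities.

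The first step is to verify the exact factorization $\tilde G=H*L$.

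\emph{$H$ is a subgroup.} We have
\[
(a,a)*(c,c)=(a\circ c,\,a\lambda_a(c))=(a\circ c,\,a\circ c),
\]
since $a\lambda_a(c)=a\cdot a^{-1}(a\circ c)$.

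\emph{$L$ is a subgroup.} Clearly $(a,e)*(c,e)=(a\circ c,e)$.

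\emph{Trivial intersection.} $H\cap L=\{(e,e)\}$ is immediate.

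\emph{Product decomposition.} We compute
\[
(b,b)*(b^{\circ(-1)}\circ a,\,e)=(a,\,b).
\]
Thus every element factors uniquely as $h*l$. By Example~\ref{prelim examples}(3), $B(h*l)=l^{*(-1)}$ is an RB-operator on $\tilde G$. Since $(a,b)=(b,b)*(b^{\circ(-1)}\circ a,e)$ and $(x,e)^{*(-1)}=(x^{\circ(-1)},e)$, we get $B(a,b)=(a^{\circ(-1)}\circ b,\,e)$. Proposition~\ref{RBgroup to skew brace  proposition} then makes $(\tilde G,*,\circ_B)$ a skew brace.

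The second step is to show that $\psi(g)=(e,g)$ is an injective homomorphism of skew braces $G\to\tilde G(B)$. Injectivity is obvious.

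\emph{Compatibility with $\cdot$.} We have $(e,g)*(e,h)=(e,\,g\lambda_e(h))=(e,gh)$, because $\lambda_e=\mathrm{id}$.

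\emph{Compatibility with $\circ$.} This is the main computation. First, $B(e,g)=(g,e)$, with $*$-inverse $(g^{\circ(-1)},e)$. Then
\[
(e,g)\circ_B(e,h)=(e,g)*(g,e)*(e,h)*(g^{\circ(-1)},e).
\]
We evaluate step by step:
\begin{itemize}
\item $(e,g)*(g,e)=(g,g)$;
\item $(g,g)*(e,h)=(g,\,g\lambda_g(h))=(g,\,g\circ h)$;
\item $(g,\,g\circ h)*(g^{\circ(-1)},e)=(e,\,g\circ h)$.
\end{itemize}
Hence $\psi(g\circ h)=\psi(g)\circ_B\psi(h)$.

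It follows that $\operatorname{Im}\psi=\{e\}\times G$ is closed under both operations and is a skew subbrace isomorphic to $G$, which proves the theorem. The only delicate points are the bookkeeping of $\circ$-inverses and the repeated use of $a\lambda_a(c)=a\circ c$. I expect the $\circ_B$ computation to be the main place where care is needed.
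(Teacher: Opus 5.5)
Your proof is correct and follows the paper's own construction: the semidirect product $(G,\circ)\ltimes(G,\cdot)$, the exact factorization via $H$ and $L$, the operator $B(a,b)=(a^{\circ(-1)}\circ b,e)$, and the embedding $g\mapsto(e,g)$, with the same $\circ_B$ computation the paper carries out in the affine version of this theorem. The only step you leave implicit is that $H$ and $L$ are closed under inverses, and this follows immediately from your product formulas, which show that $a\mapsto(a,a)$ and $a\mapsto(a,e)$ are homomorphisms from $(G,\circ)$ into $\tilde G$.
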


\subsection{Group objects, affine groups and Hopf algebras}
Let \(\mathsf{Set}\) be the category of sets and \(\mathsf{Alg}_k\) the category of commutative \(k\)-algebras.  
For a category \(\mathcal{C}\), we write \(\mathsf{Func}[\mathcal{C}, \mathsf{Set}]\) for the category of functors from \(\mathcal{C}\) to sets, and \(\mathsf{RepFunc}[\mathcal{C}, \mathsf{Set}]\) for its full subcategory of representable functors. 

Throughout this paper, we always assume that \(\mathcal{C}\) is locally small and has finite products. In particular, \(\mathcal{C}\) has a terminal object \(*\) with canonical isomorphisms \(S \times * \cong S \cong * \times S\) for any object \(S\).

For a category with finite products and coproducts, we denote the product of two objects \(A_1, A_2\) by \(A_1 \times A_2\), with projections \(\pi_i: A_1 \times A_2 \to A_i\).  
Their coproduct is denoted by \(A_1 \otimes A_2\), with injections \(\iota_i: A_i \to A_1 \otimes A_2\).  
The universal properties of products and coproducts give the morphism \(\langle f_1, f_2 \rangle: X \to A_1 \times A_2\) for \(f_i: X \to A_i\), and the morphism \([f_1, f_2]: A_1 \otimes A_2 \to X\) for \(f_i: A_i \to X\). 

Moreover, for morphisms \(f_i: A_i \to B_i\) (\(i=1,2\)), we set
\[
f_1 \times f_2 := \langle f_1 \circ \pi_1, \, f_2 \circ \pi_2 \rangle: A_1 \times A_2 \longrightarrow B_1 \times B_2,
\]
\[
f_1 \otimes f_2 := [\iota_1 \circ f_1, \, \iota_2 \circ f_2]: A_1 \otimes A_2 \longrightarrow B_1 \otimes B_2.
\]

Write $A^{\times n}$ for the $n$-fold product and $A^{\otimes n}$ for the $n$-fold coproduct. 
For $\varphi: A \times A \to A$, define $\varphi^{(1)} = \varphi$ and 
\begin{equation}
\label{phi(n):A(n+1) to A}
    \varphi^{(n+1)} = \varphi \circ (\varphi^{(n)} \times \mathrm{id}) : A^{\times (n+2)} \to A.
\end{equation}
For $\psi:A \otimes A \to A$, define $\psi^{(1)} = \psi$ and 
\begin{equation}
\label{psi(n):A(n+1) to A}
    \psi^{(n+1)} =\psi \circ (\psi^{(n)}\otimes \mathrm{id}):A ^{\otimes (n+2)} \to A.
\end{equation}
For $\psi: A \to A \otimes A$, define $\psi^{(1)} = \psi$ and 
\begin{equation}
\label{psi(n):A to A(n+1)}
    \psi^{(n+1)} = (\psi^{(n)} \otimes \mathrm{id}) \circ \psi : A \to A ^{\otimes (n+2)}.
\end{equation}
These notations extend to other objects and morphisms when context permits.

Consider an object $A$ of $\mathcal{C}$. Then $A$ defines a functor $h^A:\mathcal{C} \rightarrow \mathsf{Set}$ by
\[\left\{
\begin{aligned}
&R \mapsto h^A(R):=\mathrm{Hom}(A, R),\quad R \in \mathrm{ob}(\mathcal{C})\\
&f \mapsto h^A(f),\mbox{ with } h^A(f)(g):=f \circ g, \quad f: R \to R', ~g \in h^A(R).
\end{aligned}
\right.\]
A morphism $\alpha: A' \to A$ defines a map $f \mapsto f \circ \alpha: h^{A}(R) \to h^{A'}(R)$ which is natural in $R$. Thus $A \rightsquigarrow h^{A}$ is a contravariant functor $\mathcal{C} \to \mathsf{Func}[\mathcal{C}, \mathsf{Set}]$. Symbolically, $h^A = \operatorname{Hom}(A, -)$ is covariant. Similarly, define the contravariant functor $h_A=\operatorname{Hom}(-,A): \mathcal{C} \to \mathsf{Set}$. Then $A \rightsquigarrow h_{A}$ is a covariant functor $\mathcal{C} \to \mathsf{Func}[\mathcal{C}^{\mathrm{opp}}, \mathsf{Set}]$.

The {\bf Yoneda lemma} states that for any functor \(F: \mathcal{C} \to \mathsf{Set}\) and any object \(A \in \mathcal{C}\), there is a bijection $\operatorname{Nat}(h^A, F) \cong F(A)$, which is natural in both \(A\) and \(F\). In particular, taking \(F = h^B\) gives $\operatorname{Nat}(h^A, h^B) \cong \operatorname{Hom}(B, A)$.
Thus, the functor \(A \rightsquigarrow h^A\) is fully faithful. 

A functor $F:\mathcal{C}\rightarrow\mathsf{Set}$ is said to be {\bf representable} if it is isomorphic to $h^A$ for some object $A$ in $\mathcal{C}$. The Yoneda lemma says that $A \rightsquigarrow h^A$ is a contravariant equivalence from $C$ onto the category of representable functors \(\mathsf{RepFunc}[\mathcal{C}, \mathsf{Set}]\).

Recall that a \textbf{group object} in a category \(\mathcal{C}\) is an object \(G \in \mathcal{C}\) together with a morphism \(m: G \times G \to G\) for which there exist morphisms \(e: * \to G\) and \(\operatorname{inv}: G \to G\) satisfying the associativity, unit, and inverse laws (expressed by commutative diagrams).  We denote a group object by \((G, m)\) when the unit and inverse are understood, or by \((G, m, e, \operatorname{inv})\) when we need to make them explicit.

It is clear that  a group object in $\mathsf{Set}$ is just a group. Note that the tensor product is the coproduct in the category \(\mathsf{Alg}_k\), so the category \(\mathsf{RepFunc}[\mathsf{Alg}_k, \mathsf{Set}]\) has finite products.

Then we recall the notion of affine groups and several equivalent characterizations that will be useful. For a comprehensive introduction to the theory of affine groups (scheme) we refer to \cite{Mil12} and \cite{W1979}.

\begin{remark}
A representable functor \(\mathcal{A}: \mathsf{Alg}_k \to \mathsf{Set}\) is called an affine scheme.
\end{remark}

\begin{definition}
\label{def affine group}
An \textbf{affine group scheme over $k$} (or simply an \textbf{affine group}) is a representable functor $G \colon \mathsf{Alg}_k \to \mathsf{Set}$ together with a natural transformation $m \colon G \times G \to G$ such that, for any $R \in \mathsf{Alg}_k$, $m_R \colon G(R) \times G(R) \to G(R)$ is a group structure on $G(R)$. If $G$ is represented by a finitely generated $k$-algebra, then it is called an \textbf{affine algebraic group}.
\end{definition}

An \textbf{affine (algebraic) group homomorphism} from $G$ to $H$ is a family of homomorphisms $\Phi_R : G(R) \to H(R)$
of groups such that $\Phi_{R'} \circ G(f)=H(f) \circ \Phi_R$ for  \(f \in \operatorname{Hom_{\mathsf{Alg}_k}}(R,R')\). In other words, a homomorphism of affine groups is a natural transformation preserving the group structures. 

Thus, affine groups over \(k\) with their homomorphisms form a category, denoted by \(\mathsf{AffGrp}_k\).

\begin{definition}
Let \((G, m)\) be an affine group over \(k\). The {\bf coordinate ring} of \(G\) is the commutative \(k\)-algebra \(A\) such that \(G \cong h^A\). It is denoted by \(\mathcal{O}(G)\).
\end{definition}

The following proposition gives the equivalent definitions of affine groups. 
\begin{proposition}\label{pro:equivalent affine groups}
     The following are equivalent:
    \begin{enumerate}
        \item[{\rm (1)}] $(G,m)$ is an affine group over $k$.
        \item[{\rm (2)}]$(G,m)$ is a group object in $\mathsf{RepFunc}[\mathsf{Alg}_k, \mathsf{Set}]$.
       \item[{\rm (3)}] A functor  $G \colon \mathsf{Alg}_k \to \mathsf{Grp}$ whose underlying set-valued functor is representable, where \(\mathsf{Grp}\) is the the category of groups.
    \end{enumerate}
\end{proposition}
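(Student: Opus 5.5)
We will prove the cycle (1) $\Rightarrow$ (2) $\Rightarrow$ (3) $\Rightarrow$ (1). The key is to compute products and the terminal object in $\mathsf{RepFunc}[\mathsf{Alg}_k,\mathsf{Set}]$ and to observe that they are computed pointwise. If $G\cong h^A$ and $H\cong h^{A'}$, then the universal property of the tensor product (the coproduct in $\mathsf{Alg}_k$) gives a natural bijection
\[
h^{A\otimes A'}(R)=\operatorname{Hom}(A\otimes A',R)\cong \operatorname{Hom}(A,R)\times\operatorname{Hom}(A',R).
\]
So the pointwise product $G\times H$ is again representable and is the categorical product in $\mathsf{RepFunc}$. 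Similarly, $h^k$ is the constant one-point functor, because $k$ is initial in $\mathsf{Alg}_k$, and it is the terminal object. Consequently, a natural transformation $G\times G\to G$, or $*\to G$, or $G\to G$, is the same thing as a family of maps $G(R)\times G(R)\to G(R)$ (respectively a choice of element, respectively a self-map) that is natural in $R$.

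For (1) $\Rightarrow$ (2), suppose each $m_R$ is a group law. Let $e_R\in G(R)$ be the identity and $\operatorname{inv}_R$ the inversion. Any $f\colon R\to R'$ gives a map $G(f)$ that commutes with $m$ by naturality, so $G(f)$ is a group homomorphism and hence preserves identities and inverses. Therefore $e$ and $\operatorname{inv}$ are natural transformations. The associativity, unit and inverse diagrams commute because they commute componentwise, and equality of natural transformations is checked componentwise.

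For (2) $\Rightarrow$ (3), evaluate the diagrams at each $R$. Since products and the terminal object are pointwise, each $G(R)$ is a group with operation $m_R$. Naturality of $m$ shows that each $G(f)$ is a homomorphism, so $G$ lifts to a functor into $\mathsf{Grp}$ whose underlying set-valued functor is representable.

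For (3) $\Rightarrow$ (1), define $m_R$ as the multiplication of $G(R)$. Naturality of $m$ is exactly the statement that each $G(f)$ is a group homomorphism. Representability is given by hypothesis.

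The only step needing care is the first one, identifying the categorical product in the full subcategory $\mathsf{RepFunc}$ with the pointwise product. I would handle it via Yoneda: since $h^{A\otimes A'}$ is isomorphic to the pointwise product, it satisfies the universal property in all of $\mathsf{Func}$, and hence also in the full subcategory. The rest is routine bookkeeping.
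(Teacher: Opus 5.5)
Your proof is correct. The paper states this proposition without proof, treating it as standard, but your argument is exactly the pattern the paper uses for its Rota--Baxter analogue, Theorem~\ref{affineRBgroup THM}: identify products and the terminal object in $\mathsf{RepFunc}[\mathsf{Alg}_k,\mathsf{Set}]$ with the pointwise ones via $h^{A}\times h^{A'}\cong h^{A\otimes A'}$ and $h^k\cong *$, check the group-object diagrams componentwise, and read naturality of $m$ as ``each $G(f)$ is a group homomorphism''. You are in fact more careful than the paper is there, since you verify that $e$ and $\operatorname{inv}$ are automatically natural.
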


Let $(G, m, e, \operatorname{inv})$ be an affine group, i.e., a group object in $\mathsf{RepFunc}[\mathsf{Alg}_k, \mathsf{Set}]$. For each $R\in\mathsf{Alg}_k$, the set $G(R)$ is a group with multiplication $m_R$, unit $e_R$, and inversion $\operatorname{inv}_R$.

\begin{remark}\label{Hom on affine group}
Since $G$ is representable, $G\cong h^{\mathcal{O}(G)}$, where $\mathcal{O}(G)$ is the coordinate ring. By the Yoneda lemma, the natural transformations $m$, $e$, and $\operatorname{inv}$ correspond uniquely to $k$-algebra homomorphisms
\begin{align*}
\Delta &: \mathcal{O}(G) \to \mathcal{O}(G)\otimes\mathcal{O}(G),\\
\varepsilon &: \mathcal{O}(G) \to k,\\
S &: \mathcal{O}(G) \to \mathcal{O}(G),
\end{align*}
satisfying, for any $f,g\in G(R)\cong \operatorname{Hom}(\mathcal{O}(G),R)$,
\begin{enumerate}
\item[\rm(1)] $f\cdot g = m_R(f,g) = \mu_R \circ (f\otimes g)\circ \Delta$, where $\mu_R:R \otimes R \to R$ is the multiplication map of $R$, 
\item[\rm(2)] the identity element is $\tilde{e}= e_R(\eta_R) = \eta_R\circ\varepsilon$, where $\eta_R:k\to R$ is the unit map of $R$,
\item[\rm(3)] $f^{-1} = \operatorname{inv}_R(f) = f\circ S$.
\end{enumerate}
\end{remark}

Its coordinate ring \(\mathcal{O}(G)\) then becomes a commutative Hopf algebra with comultiplication \(\Delta\), counit \(\varepsilon\) and antipode \(S\) corresponding to \(m, e, \operatorname{inv}\) respectively under the Yoneda lemma.

\begin{proposition}
\label{Prop affinegroup equivalent  to Hopf}
The above correspondence  gives an equivalence of categories:
\[
\mathsf{AffGrp}_k \simeq \mathsf{Hopf}_k^{\mathrm{opp}},
\]
where \(\mathsf{Hopf}_k^{\mathrm{opp}}\) denotes the opposite category of commutative Hopf algebras over \(k\).
\end{proposition}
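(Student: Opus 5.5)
The plan is to build the equivalence from two ingredients: the contravariant Yoneda equivalence between $\mathsf{Alg}_k$ and representable functors, and Remark~\ref{Hom on affine group}, which translates group objects into Hopf algebras. By Proposition~\ref{pro:equivalent affine groups}, an affine group is the same as a group object in $\mathsf{RepFunc}[\mathsf{Alg}_k,\mathsf{Set}]$. Since $A \rightsquigarrow h^A$ is a contravariant equivalence $\mathsf{Alg}_k \to \mathsf{RepFunc}[\mathsf{Alg}_k,\mathsf{Set}]$, it sends coproducts to products: $h^{A\otimes A}\cong h^A\times h^A$ naturally, because a map out of $A\otimes A$ into $R$ is a pair of maps $A\to R$. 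Likewise it sends the initial object $k$ to the terminal functor $*$. Hence group objects in representable functors correspond exactly to cogroup objects in $\mathsf{Alg}_k$.

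First I define the functor $\mathcal{O}\colon \mathsf{AffGrp}_k\to \mathsf{Hopf}_k^{\mathrm{opp}}$ on objects by $G\mapsto (\mathcal{O}(G),\Delta,\varepsilon,S)$ as in Remark~\ref{Hom on affine group}. To check the Hopf axioms, I translate each group axiom through Yoneda. For associativity, $m\circ(m\times\mathrm{id})=m\circ(\mathrm{id}\times m)$ becomes $(\Delta\otimes\mathrm{id})\Delta=(\mathrm{id}\otimes\Delta)\Delta$, using that $h^{(-)}$ converts $\Delta\otimes\mathrm{id}$ into $m\times\mathrm{id}$ under the identification $h^{A\otimes A\otimes A}\cong (h^A)^{\times 3}$. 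The unit law gives $(\varepsilon\otimes\mathrm{id})\Delta=\mathrm{id}=(\mathrm{id}\otimes\varepsilon)\Delta$. The inverse law $m\circ\langle \mathrm{id},\mathrm{inv}\rangle = e\circ(\text{terminal map})$ gives $\mu\circ(\mathrm{id}\otimes S)\circ\Delta=\eta\circ\varepsilon$. Concretely, one can verify each identity by evaluating at the universal element $\mathrm{id}_{A^{\otimes n}}\in G^{\times n}(A^{\otimes n})$, using formula (1) of Remark~\ref{Hom on affine group}. Finally, $\Delta$ and $\varepsilon$ are algebra maps, so $\mathcal{O}(G)$ is a commutative Hopf algebra.

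On morphisms, a homomorphism $\Phi\colon G\to H$ is a natural transformation $h^{\mathcal{O}(G)}\to h^{\mathcal{O}(H)}$, so by Yoneda it equals $-\circ\varphi$ for a unique algebra map $\varphi\colon\mathcal{O}(H)\to\mathcal{O}(G)$. The condition $\Phi(f\cdot g)=\Phi(f)\cdot\Phi(g)$, evaluated at the universal pair in $G(\mathcal{O}(G)\otimes\mathcal{O}(G))$, is equivalent to $(\varphi\otimes\varphi)\Delta_H=\Delta_G\varphi$. Counit and antipode compatibility then follow automatically: a monoid map between groups preserves units and inverses, and dually any bialgebra map between Hopf algebras commutes with the antipodes. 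This shows the functor is faithful and full.

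For essential surjectivity, given a commutative Hopf algebra $A$, I set $G=h^A$ with $m_R(f,g)=\mu_R\circ(f\otimes g)\circ\Delta$, $e_R=\eta_R\circ\varepsilon$ and $f^{-1}=f\circ S$. The Hopf axioms give the group axioms pointwise, and naturality in $R$ is clear since postcomposition with an algebra map commutes with all three formulas. Its coordinate ring is $A$ with the original structure maps, so the functor is essentially surjective, and hence an equivalence. The main obstacle is bookkeeping rather than substance: I must check that the isomorphisms $h^{A^{\otimes n}}\cong (h^A)^{\times n}$ are compatible with $\Delta\otimes\mathrm{id}$ versus $m\times\mathrm{id}$. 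I will handle this once, as a general lemma saying that $[f_1,f_2]\circ(\alpha_1\otimes\alpha_2)=[f_1\alpha_1,f_2\alpha_2]$, and then every axiom transfers formally.
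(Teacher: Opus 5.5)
Your proposal is correct and follows essentially the same route as the paper. The paper gives no separate proof: it treats the statement as the standard consequence of transporting the group-object axioms through the Yoneda correspondence recorded in Remark \ref{Hom on affine group}, and refers to Milne and Waterhouse for the theory of affine groups. Your argument is a faithful expansion of that correspondence: $h^{A\otimes A}\cong h^A\times h^A$ turns group objects into cogroup objects, evaluation at universal elements gives full faithfulness, and the convolution group structure on $h^A(R)$ gives essential surjectivity. It also mirrors the paper's own proof of Theorem \ref{coRBHopf = AffRBgroup category equivalence}.
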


\section{Affine Rota--Baxter groups and co-Rota--Baxter Hopf algebras}\label{sec:affine}
In this section, we first introduce the notion of an affine Rota--Baxter group. This construction naturally induces a new affine group, which we call a descendant affine group. We then establish an equivalence between affine Rota--Baxter groups and co-Rota--Baxter Hopf algebras. Finally, we study the relationships between descendant affine groups and  descendant Hopf algebras. 
\subsection{Rota--Baxter group objects}
\label{sec:RBG in C}
We first extend the notion of RB groups to a locally small category $\mathcal{C}$ equipped with finite products.

\begin{definition}\label{Rota--Baxter group object def}
    A \textbf{Rota--Baxter group object in \(\mathcal{C}\)} is a quintuple \((G, m, e, \operatorname{inv}, B)\), where \((G, m, e, \operatorname{inv})\) is a group object in \(\mathcal{C}\) and \(B: G \to G\) is a morphism satisfying the following commutative diagram:
\begin{equation}
\label{Rota--Baxter operator in C defined by diagram}
    \begin{tikzcd}[column sep = 2.5cm, row sep = huge]
     G^{\times 2} \ar[r,"B\times B"] \ar[d,"{\langle \pi_1, \pi_1, \pi_2, \pi_1\rangle}"]  & G^{\times 2} \ar[r,"m"]  & G \\G^{\times 4} \ar[r,"\mathrm{id} \times B \times \mathrm{id} \times (\mathrm{inv} \circ B)"]  & G^{\times 4} \ar[r,"m^{(3)}"] & G \ar[u,"B"],
    \end{tikzcd}
\end{equation}
where $m^{(3)}$ is given by \eqref{phi(n):A(n+1) to A}.
For brevity, we often denote it as the triple \((G, m, B)\). We call $B:G \rightarrow G$ a \textbf{Rota--Baxter operator on group object $(G,m)$} in $\mathcal{C}$.
\end{definition}

\begin{remark}
\label{remark RBG object in Set}
When we consider $\mathcal{C}=\mathsf{Set}$, then $G$ is a set and Diagram \ref{Rota--Baxter operator in C defined by diagram} implies:  for any $g , h \in G$,
\[
B(g)B(h)=B\big(gB(g)hB(g)^{-1} \big).
\]
Thus a Rota--Baxter group object in $\mathsf{Set}$ is precisely a Rota--Baxter group.
\end{remark}

\begin{example}
A Rota--Baxter group object $(G,m,e,\operatorname{inv},B)$ in the category of smooth manifolds is a Rota--Baxter Lie group. Indeed, $m$, $e$, $\operatorname{inv}$ and $B$ are all required to be smooth, which is exactly the definition of a Rota--Baxter Lie group.
\end{example}

\subsection{Affine (algebraic) Rota--Baxter groups}

\begin{definition}
\label{def affine RB group}
    An \textbf{affine Rota--Baxter group scheme over $k$} (or simply an \textbf{affine Rota--Baxter group}) is a representable functor $G\colon \mathsf{Alg}_k \to \mathsf{Set}$ together with two natural transformations $m\colon G\times G \to G$ and $B:G \to G$ such that, for all $R$ in $\mathsf{Alg}_k$, 
    \[
        m_R\colon G(R)\times G(R) \to G(R) 
    \]
    and
    \[
        B_R\colon G(R) \to G(R)
    \]
   make $\big(G(R),m_R,B_R \big)$ into a Rota--Baxter group. If $G$ is represented by a finitely generated $k$-algebra, then it is called an \textbf{affine algebraic Rota--Baxter group}. We call $B$ a \textbf{Rota--Baxter operator on the affine (algebraic) group $(G,m)$}.
\end{definition}

In what follows, for an affine Rota--Baxter group we usually write \((G,B)\) or \((G, m, B)\), and we will often refer to it simply as an \textbf{affine RB group}.

\begin{definition}
\label{def affRBG homomorphism}
Let \((G, m_G, B_G)\) and \((H, m_H, B_H)\) be affine (algebraic) Rota--Baxter groups over \(k\). A \textbf{homomorphism} \(\Phi: (G, m_G, B_G) \to (H, m_H, B_H)\) is a natural transformation such that for every \(k\)-algebra \(R\), the map
\[
\Phi_R: G(R) \longrightarrow H(R)
\]
is a homomorphism of Rota--Baxter groups. Equivalently, the following diagrams commute:
\begin{equation}
\begin{tikzcd}
G \times G \arrow[r, "m_G"] \arrow[d, "\Phi \times \Phi"'] & G \arrow[d, "\Phi"] \\
H \times H \arrow[r, "m_H"'] & H
\end{tikzcd}
\qquad
\begin{tikzcd}
G \arrow[r, "B_G"] \arrow[d, "\Phi"'] & G \arrow[d, "\Phi"] \\
H \arrow[r, "B_H"'] & H .
\end{tikzcd}
\end{equation}
\end{definition}

Therefore, affine Rota--Baxter groups over $k$ and the morphisms between them form a category, called the category of affine Rota--Baxter groups and denoted by $\mathsf{AffRBGrp}_k$.

\begin{definition}
The {\bf coordinate ring} of an affine Rota--Baxter group \((G, m, B)\) is defined as the coordinate ring of its underlying affine group, denoted by \(\mathcal{O}(G)\) or \(\mathcal{O}(G, m, B)\).
\end{definition}

\begin{example}
Define the functor \(\mathbb{G}: \mathbf{Alg}_k \to \mathbf{Set}\) by \(\mathbb{G}(R) = R^\times\), where \(R^\times\) denotes the set of multiplicative units in \(R\), and \(\mathbb{G}(f) = f\) for any homomorphism \(f: R \to S\) of commutative \(k\)-algebras. Let \(m: \mathbb{G} \times \mathbb{G} \to \mathbb{G}\) be the natural transformation given by \(m_R(x, y) = x \cdot_R y\), where \(\cdot_R\) is the multiplication of \(R\). Therefore \((\mathbb{G}, m)\) is an affine algebraic group represented by \(k[X, X^{-1}]\). Define \(B_R: \mathbb{G}(R) \to \mathbb{G}(R)\) by \(B_R(x) = x^{-1}\), which is natural in \(R\). Then \((\mathbb{G}, m, B)\) is an affine Rota--Baxter group scheme.
\end{example}

\begin{example}
The functor \({\rm GL}_n\) defined by \({\rm GL}_n(R) = \{ A \in M_n(R) \mid \det(A) \in R^\times \}\) is an affine algebraic group, represented by \(k[X_{ij}, \det^{-1}]\). Define \(B_R(A) = A^{-1}\). One can check that $B$ is a natural transformation. Then \((\mathrm{GL}_n, B)\) is an affine algebraic Rota--Baxter group.
\end{example}

Analogous to Proposition~\ref{pro:equivalent affine groups}, we have the following characterizations of affine Rota--Baxter groups.

\begin{theorem}
\label{affineRBgroup THM}
    The following are equivalent:
    \begin{enumerate}
        \item[\rm(1)] $(G,m,B)$ is an affine Rota--Baxter group over $k$.
          \item[\rm(2)]  $(G,m,B)$ is a Rota--Baxter group object in $\mathsf{RepFunc}[\mathsf{Alg}_k, \mathsf{Set}]$.
          \item[\rm(3)]  A functor  $G \colon \mathsf{Alg}_k \to \mathsf{RBGrp}$ whose underlying set-valued functor is representable.
    \end{enumerate}
\end{theorem}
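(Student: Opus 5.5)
The plan is to reduce everything to Proposition~\ref{pro:equivalent affine groups}, which already handles the group part. The only new ingredient is the operator $B$ and the Rota--Baxter identity. The key observation is that in $\mathsf{Func}[\mathsf{Alg}_k,\mathsf{Set}]$, and hence in its full subcategory $\mathsf{RepFunc}[\mathsf{Alg}_k,\mathsf{Set}]$, finite products are computed objectwise. Representable functors are closed under finite products, since $h^A\times h^{A'}\cong h^{A\otimes A'}$ and the terminal object is $h^k$. Consequently a diagram of natural transformations between products of $G$ commutes if and only if it commutes componentwise at every $R\in\mathsf{Alg}_k$.

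For (1)$\Leftrightarrow$(2), I would start from a representable $G$ with natural transformations $m$ and $B$. By Proposition~\ref{pro:equivalent affine groups}, $(G(R),m_R)$ is a group for all $R$ exactly when $(G,m)$ is a group object, with $e$ and $\operatorname{inv}$ natural. For $e$, note that it is the element of $G(k)$ obtained from the unique point of the terminal functor. Then I would evaluate Diagram~\eqref{Rota--Baxter operator in C defined by diagram} at $R$. The morphisms $\langle\pi_1,\pi_1,\pi_2,\pi_1\rangle$, $\mathrm{id}\times B\times\mathrm{id}\times(\operatorname{inv}\circ B)$ and $m^{(3)}$ become, at $R$, the maps $(g,h)\mapsto(g,g,h,g)\mapsto(g,B_R(g),h,B_R(g)^{-1})\mapsto gB_R(g)hB_R(g)^{-1}$. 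The top row becomes $(g,h)\mapsto B_R(g)B_R(h)$. So, exactly as in Remark~\ref{remark RBG object in Set}, commutativity of the diagram is equivalent to $B_R$ being an RB-operator on $G(R)$ for every $R$. This is precisely condition (1).

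For (1)$\Leftrightarrow$(3), given (1), I would define the functor $R\mapsto(G(R),m_R,B_R)$ into $\mathsf{RBGrp}$. For $f:R\to R'$, naturality of $m$ makes $G(f)$ a group homomorphism. Naturality of $B$ gives $G(f)\circ B_R=B_{R'}\circ G(f)$, so $G(f)$ is a morphism of RB groups. Functoriality is inherited from the set-valued functor, whose representability is assumed. Conversely, given a functor $G:\mathsf{Alg}_k\to\mathsf{RBGrp}$ with representable underlying functor, I would define $m_R$ and $B_R$ to be the structure maps of $G(R)$. The condition that each $G(f)$ is an RB-group homomorphism says exactly that $m$ and $B$ are natural transformations, which gives (1).

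I expect no real obstacle here. The only point needing care is in (2): one must check that the composite morphisms built from universal properties in $\mathsf{RepFunc}$ really have the stated componentwise descriptions. In particular, the iterated product $m^{(3)}$ from \eqref{phi(n):A(n+1) to A} must evaluate to the left-associated product $((ab)c)d$, and $\operatorname{inv}_R$ must be the group inverse. Both follow from products being objectwise and from Proposition~\ref{pro:equivalent affine groups}.
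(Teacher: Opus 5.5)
Your proposal is correct and follows essentially the same route as the paper. For (1)$\Leftrightarrow$(2) you evaluate the defining diagram componentwise at each $R$, with Proposition~\ref{pro:equivalent affine groups} handling the group part, and for (1)$\Leftrightarrow$(3) you observe that naturality of $m$ and $B$ is exactly the statement that each $G(\varphi)$ is an RB-group homomorphism; your explicit remark that finite products in $\mathsf{RepFunc}[\mathsf{Alg}_k,\mathsf{Set}]$ are computed objectwise (since $h^A\times h^{A'}\cong h^{A\otimes A'}$) makes precise a point the paper leaves implicit.
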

\begin{proof}
First we show the equivalence between (1) and (2). By Proposition~\ref{pro:equivalent affine groups}, an affine group $(G,m)$ is equivalent to a group object $(G,m,e,\operatorname{inv})$ in $\mathsf{RepFunc}[\mathsf{Alg}_k, \mathsf{Set}]$. For $(G,m,B)$ to be an affine RB group, the following diagram should commute for every $R\in\mathsf{Alg}_k$:
\[
    \begin{tikzcd}[column sep = 3cm, row sep = huge]
    G(R)^{\times 2} \ar[r,"B_R\times B_R"] \ar[d,"{\langle \pi_1, \pi_1, \pi_2, \pi_1\rangle}"]  & G(R)^{\times 2} \ar[r,"m_R"]  & G(R) \\G(R)^{\times 4} \ar[r,"\mathrm{id} \times B_R \times \mathrm{id} \times (\mathrm{inv_R} \circ B_R)"]  & G(R)^{\times 4} \ar[r,"m_R^{(3)}"] & G(R) \ar[u,"B_R"] ,
    \end{tikzcd}
\]
where $m_R^{(3)}$ is given by \eqref{phi(n):A(n+1) to A}. Note that this is equivalent to $(G,m,e,\mathrm{inv})$ is a group object in $\mathsf{RepFunc}[\mathsf{Alg}_k, \mathsf{Set}]$ together with a natural transformation $B$ satisfying Diagram \eqref{Rota--Baxter operator in C defined by diagram}, i.e., $(G,m,e,\mathrm{inv},B)$ is an RB group object in $\mathsf{RepFunc}[\mathsf{Alg}_k, \mathsf{Set}]$. 

Next we show (1) is equivalent to (3). Let $(G,m,B)$ be an affine RB group. Because $m$ and $B$ are natural transformations, for any algebra homomorphism $\varphi:R\to R'$, the induced map $G(\varphi): G(R) \to G(R')$ automatically preserves the multiplication $m_R$ and the RB operator $B_R$; hence $G(\varphi)$ is a homomorphism of RB groups. This defines a functor $\mathsf{Alg}_k \to \mathsf{RBGrp}$. 

Conversely, let $G$ be a functor $\mathsf{Alg}_k \to \mathsf{RBGrp}$ whose underlying set-valued functor is representable. For any $R\in \mathsf{Alg}_k$, $G(R)$ is an RB group and we denote the group multiplication by $m_R$ and the RB operator by $B_R$. It remains to check that $m_R$ and $B_R$ are natural in $R$. For any algebra homomorphism $\varphi:R\to R'$, $G(\varphi):G(R)\to G(R')$ is an RB group homomorphism. So we have
\[
G(\varphi) \circ m_R = m_{R'}\circ \big(G(\varphi) \times G(\varphi)\big)
\]
and
\[
G(\varphi)\circ B_R = B_{R'}\circ G(\varphi).
\]
These equalities precisely state that $m$ and $B$ are natural transformations. Hence the representable functor $G$, together with $m$ and $B$, forms an affine RB group.
\end{proof}

\begin{remark}
\label{Hom RBGroup}
The group structure of $G(R) \cong h^{\mathcal{O}(G)}(R)$ is clear in Remark \ref{Hom on affine group}. By Yoneda lemma, there exists a unique algebra homomorphism $\mathcal{B}:\mathcal{O}(G) \to \mathcal{O}(G)$ correponding to $B$  such that 
\[
    B_R(f)=f\circ \mathcal{B}, \qquad \forall f \in h^{\mathcal{O}(G)}(R).
\]
\end{remark}

\begin{proposition}
\label{Be=e affine RBGrp}
    Let $(G,m,e,\operatorname{inv},B)$ be an affine RB group. Then we have $B\circ e =e$.
\end{proposition}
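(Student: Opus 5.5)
The plan is to reduce the statement to the set-theoretic fact in Remark~\ref{o^B remark} (that $B(e)=e$ in any RB group) and then use that $e$ and $B$ are natural transformations. Here $e\colon * \to G$ is a natural transformation from the terminal functor $*$ (with $*(R)$ a singleton for every $R$) to $G$. For each $R\in\mathsf{Alg}_k$, $e_R$ picks out the identity element $e_{G(R)}$ of the group $(G(R),m_R)$. Both $B\circ e$ and $e$ are natural transformations $*\to G$, and natural transformations are equal exactly when their components agree. So it suffices to prove $B_R(e_{G(R)})=e_{G(R)}$ for every $R$.

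Fix $R$. By Definition~\ref{def affine RB group}, $(G(R),m_R,B_R)$ is an RB group. I will recall the short argument behind Remark~\ref{o^B remark} rather than merely cite it. Put $g=h=e$ in \eqref{eq:RB-operator}, writing $e$ for $e_{G(R)}$:
\[
B(e)B(e)=B\big(e\,B(e)\,e\,B(e)^{-1}\big)=B(e).
\]
Cancelling $B(e)$ in the group $G(R)$ gives $B_R(e)=e$.

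Hence $(B\circ e)_R=B_R\circ e_R=e_R$ for all $R$, and therefore $B\circ e=e$ as natural transformations. If desired, the same identity can be stated dually via the Yoneda lemma: it says $\varepsilon\circ\mathcal{B}=\varepsilon$ on $\mathcal{O}(G)$, which follows from the above by taking $R=k$ and the universal element. There is no real obstacle; the only point needing care is identifying $e_R$ with the group identity of $G(R)$, and that comes from the group-object structure in Proposition~\ref{pro:equivalent affine groups}.
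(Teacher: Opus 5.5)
Your proposal is correct and follows essentially the same route as the paper: both reduce the claim to the componentwise fact that $B_R$ fixes the identity of the RB group $G(R)$, and then conclude $B\circ e=e$ as natural transformations $* \to G$. The only difference is presentation. The paper writes the components through the coordinate ring ($\tilde e=\eta_R\circ\varepsilon$ and $B_R(\tilde e)=\eta_R\circ\varepsilon\circ\mathcal{B}$), whereas you work with the components directly and derive $B(e)=e$ from \eqref{eq:RB-operator} yourself rather than citing Remark~\ref{o^B remark}.
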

\begin{proof}
    For the RB group $\big(h^{\mathcal{O}(G)}(R), \cdot , B_R \big)$, the RB operator $B_R$ preserve the identity element $\tilde{e}$ of $G(R)$. Therefore, we have $B_R(\tilde{e}) =\tilde{e}$.
    Furthermore, we have $\tilde{e}=\eta_R \circ \varepsilon$ and $B_R(\tilde{e})=\eta_R \circ \varepsilon \circ \mathcal{B}$. Thus, we have
    \begin{align*}
        \eta_R \circ \varepsilon \circ \mathcal{B} =\eta_R \circ \varepsilon, \quad (B\circ e)_R(\eta_R) = e_R(\eta_R),
    \end{align*}
which implies that $B\circ e =e$.
\end{proof}

We now show that an affine Rota--Baxter group admits a new affine group structure, which we call the \textbf{descendant affine group}.

\begin{proposition}
\label{descend affine RBG}
Let $(G, m, e, \operatorname{inv},B)$ be an affine Rota--Baxter group. Then we have:
\begin{enumerate}
    \item[{\rm (i)}] The quadruple $(G,m^B,e,\operatorname{inv}^B)$ is an affine group, called the \textbf{descendant affine group} of the affine Rota--Baxter group $(G, m,e,\operatorname{inv}, B)$. Here the natural transformations $m^B: G \times G \to G$ and $\operatorname{inv}^B:G \to G$ are defined by the following commutative diagrams respectively: 
    \begin{equation}
    \label{m^B diagram}
     \begin{tikzcd}[column sep = 4cm, row sep = huge]
     G^{\times 2} \ar[r, "m^B"] \ar[d,"{\langle \pi_1,\pi_1,\pi_2,\pi_1\rangle}"] 
     & G 
     \\
     G^{\times 4} \ar[r, "{\operatorname{id} \times B \times \operatorname{id} \times (\operatorname{inv} \circ B)}"]
     & G^{\times 4} \ar[u, "m^{(3)}"]
     \end{tikzcd}
    \end{equation}
and
\begin{equation}
    \label{inv^B diagram}
     \begin{tikzcd}[column sep = 4cm, row sep = huge]
     G \ar[r, "\operatorname{inv}^B"] \ar[d,"{\langle \operatorname{id}, \operatorname{id}, \operatorname{id} \rangle}"] 
     & G 
     \\
     G^{\times 3} \ar[r, "(\operatorname{inv} \circ B) \times \operatorname{inv} \times B"]
     & G^{\times 3} \ar[u, "m^{(2)}"] ,
     \end{tikzcd}
    \end{equation}
    where $m^{(2)}$ and $m^{(3)}$ are given by \eqref{phi(n):A(n+1) to A}.
    \item[{\rm (ii)}] The quintuple $(G,m^B,e,\operatorname{inv}^B,B)$ is also an affine Rota--Baxter group.
    \item[{\rm (iii)}] The natural transformation $B$ is a homomorphism of affine Rota--Baxter groups from $(G,m^B,B)$ to $(G,m,B)$.
\end{enumerate}
\end{proposition}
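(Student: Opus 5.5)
The plan is to reduce everything pointwise to the set-theoretic statements of Proposition~\ref{G oB group}, using Theorem~\ref{affineRBgroup THM} (or equivalently the definition of an affine RB group). The key observation is that all structure maps in Diagrams \eqref{m^B diagram} and \eqref{inv^B diagram} are composites of natural transformations among products of the representable functor $G$. Specifically, they are built from $m$, $\operatorname{inv}$, $B$, the diagonal-type maps $\langle \pi_1,\pi_1,\pi_2,\pi_1\rangle$ and $\langle\operatorname{id},\operatorname{id},\operatorname{id}\rangle$, and products of these. Hence $m^B$ and $\operatorname{inv}^B$ are themselves natural transformations. Evaluating at an algebra $R$ gives
\[
m^B_R(g,h)=g\,B_R(g)\,h\,B_R(g)^{-1}=g\circ_{B_R}h, \qquad \operatorname{inv}^B_R(g)=B_R(g)^{-1}g^{-1}B_R(g).
\]
The first is the descendant multiplication of the RB group $(G(R),m_R,B_R)$. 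The second is exactly its $\circ_{B_R}$-inverse, by Remark~\ref{o^B remark}.

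For (i), I will invoke Proposition~\ref{G oB group}(1) for each $R$. It shows that $(G(R),m^B_R)$ is a group, and Remark~\ref{o^B remark} shows that its identity is $e_R$ and its inverse map is $\operatorname{inv}^B_R$. So $G$ together with the natural transformation $m^B$ satisfies Definition~\ref{def affine group}, since $G$ is representable. Via Proposition~\ref{pro:equivalent affine groups} this makes $(G,m^B,e,\operatorname{inv}^B)$ a group object in $\mathsf{RepFunc}[\mathsf{Alg}_k,\mathsf{Set}]$. It is worth noting explicitly that the unit $e$ is unchanged, which is consistent with Proposition~\ref{Be=e affine RBGrp}.

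For (ii), I apply Proposition~\ref{G oB group}(2) to each $(G(R),m_R,B_R)$. It gives that $B_R$ is an RB operator on $(G(R),\circ_{B_R})$, and $B$ is already natural, so Definition~\ref{def affine RB group} is satisfied. For (iii), Proposition~\ref{G oB group}(3) says each $B_R:(G(R),\circ_{B_R},B_R)\to(G(R),m_R,B_R)$ is an RB group homomorphism. Together with the naturality of $B$, this is exactly Definition~\ref{def affRBG homomorphism}.

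The only real point of care, and what I expect to be the main obstacle, is the first step. One must verify that the diagrammatic definitions evaluate componentwise to the stated formulas. This requires that products in $\mathsf{RepFunc}[\mathsf{Alg}_k,\mathsf{Set}]$ are computed pointwise, with $(G\times G)(R)=G(R)\times G(R)$ represented by $\mathcal{O}(G)\otimes\mathcal{O}(G)$. It also requires unwinding $m^{(3)}$ and $m^{(2)}$ via \eqref{phi(n):A(n+1) to A} as iterated left-bracketed products. Once this is in place, everything else is a direct citation of the set-theoretic results.
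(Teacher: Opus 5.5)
Your proposal is correct and takes essentially the same route as the paper. The paper also evaluates the diagrams at each $R$ to get $m^B_R(x,y)=xB_R(x)yB_R(x)^{-1}$ and $\operatorname{inv}^B_R(x)=B_R(x)^{-1}x^{-1}B_R(x)$, notes that these are natural, and then cites Proposition~\ref{G oB group}(1)--(3) together with Remark~\ref{o^B remark} pointwise for (i)--(iii); your extra justification of naturality (composites of natural transformations) only spells out what the paper calls clear.
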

\begin{proof}
$\mathrm{(i)}$ It is clear that $m^B$, $e$ and $\mathrm{inv}^B$ are natural transformations. Denote $m_R(x,y)=x y$ and $\operatorname{inv}_R(x)=x^{-1}$. Then Diagram \eqref{m^B diagram} and Diagram \ref{inv^B diagram} impliy that for any $R$ in $\mathsf{Alg}_k$ and $x,y \in G(R)$, 
\[
m_R^B(x,y)= x  B_R(x) y B_R(x)^{-1}
\]
and
\[
\mathrm{inv}^B_R(x)= B_R(x)^{-1} x^{-1} B_R(x).
\]
Then \(\big(G(R), m^B_R\big)\) is a group by (1) in Proposition \ref{G oB group}. The inverse of \(x\) is \(\operatorname{inv}^B_R(x)\), and the identity element of \(\big(G(R), m^B_R\big)\) coincides with that of \(\big(G(R), m_R\big)\) by Remark \ref{o^B remark}. Therefore $(G, m^B,e,\mathrm{inv}^B)$ is an affine group.

${\rm (ii)}$ By ${\rm (2)}$ in Proposition \ref{G oB group}, $B_R$ is a RB operator on the group $\big(G(R),m_R^B\big)$. Thus, $(G,m^B,B)$ is  an affine RB group. 

${\rm (iii)}$ By ${\rm (3)}$ in Proposition \ref{G oB group}, $B_R$ is a homomorphism of RB groups from $\big(G(R),m_B^R, B_R \big)$ to $\big(G(R),m_R, B_R \big)$, which yields that $B$ is a homomorphism of affine RB groups from $(G,m^B,B)$ to $(G,m,B)$.
\end{proof}

We now introduce a notion of decomposition for affine groups, which naturally gives rise to an RB operator on the affine group. Recall that an affine subgroup is a subfunctor $H$ of $G$ such that $H(R)$ is a subgroup of $G(R)$ for all $R$ in $\mathsf{Alg}_k$.

\begin{definition}
Let \((G, m)\) be an affine group. An \textbf{exact decomposition} of \(G\) is a pair $(H, L)$, where $H$ and $L$ are affine subgroups of $G$ such that \(G(R) =H(R) L(R)\) and \(H(R) \cap L(R) = \{1_R\}\) for every \(R\).
\end{definition}

\begin{proposition}
Let \((G, m)\) be an affine group equipped with an exact decomposition \((H, L)\). For any \(g \in G(R)\), write uniquely \(g = h \ell\) with \(h \in H(R)\) and \(\ell \in L(R)\). Define
\[
B_R(g) = \ell^{-1}.
\]
Then \((G, m, B)\) is an affine Rota--Baxter group.
\end{proposition}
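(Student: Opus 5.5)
The plan is to reduce everything to the set-theoretic case, applied pointwise in $R$, and then check naturality. By Definition~\ref{def affine RB group}, we need two things. First, $B$ must be a natural transformation $G\to G$, i.e.\ a morphism of the representable functor. Second, each $(G(R),m_R,B_R)$ must be a Rota--Baxter group. The second point is immediate. For each fixed $R$, the condition $G(R)=H(R)L(R)$ with $H(R)\cap L(R)=\{1_R\}$ is an exact factorization of the abstract group $G(R)$. Hence Example~\ref{prelim examples}(3) shows that $B_R(h\ell)=\ell^{-1}$ is an RB-operator on $G(R)$. For completeness one can recall why: if $g=h\ell$ and $g'=h'\ell'$, then $gB(g)g'B(g)^{-1}=h h'\ell'\ell$, which has $L$-part $\ell'\ell$. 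So $B$ of it is $\ell^{-1}\ell'^{-1}=B(g)B(g')$.

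The substantive step is naturality. Let $\varphi:R\to R'$ be a $k$-algebra homomorphism and $g=h\ell\in G(R)$. Since $H$ and $L$ are subfunctors, $G(\varphi)(h)\in H(R')$ and $G(\varphi)(\ell)\in L(R')$. Since $G(\varphi)$ is a group homomorphism (Proposition~\ref{pro:equivalent affine groups}(3)), we get $G(\varphi)(g)=G(\varphi)(h)\,G(\varphi)(\ell)$. By uniqueness of the decomposition in $G(R')$, this is the decomposition of $G(\varphi)(g)$. Therefore
\[
B_{R'}\big(G(\varphi)(g)\big)=G(\varphi)(\ell)^{-1}=G(\varphi)(\ell^{-1})=G(\varphi)\big(B_R(g)\big),
\]
so $B$ is natural. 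The point I expect to need care is uniqueness of the factorization in $G(R')$. It follows from $H(R')\cap L(R')=\{1_{R'}\}$ in the standard way: $h\ell=h_1\ell_1$ gives $h_1^{-1}h=\ell_1\ell^{-1}\in H(R')\cap L(R')$.

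Finally, $B$ is a natural transformation between representable functors. By the Yoneda lemma (Remark~\ref{Hom RBGroup}), it therefore corresponds to an algebra endomorphism $\mathcal B$ of $\mathcal O(G)$, and no further representability condition is required. Combining the pointwise RB property with naturality, Definition~\ref{def affine RB group} (equivalently Theorem~\ref{affineRBgroup THM}(3)) gives that $(G,m,B)$ is an affine Rota--Baxter group.
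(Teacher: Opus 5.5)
Your proposal is correct and follows essentially the same route as the paper. You get the pointwise RB property from Example~\ref{prelim examples}(3), and you prove naturality of $B$ using the subfunctor property, the fact that $G(\varphi)$ is a group homomorphism, and uniqueness of the factorization in $G(R')$. Your explicit checks of the RB identity and of uniqueness are extra detail that the paper leaves implicit.
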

\begin{proof}
For every \(k\)-algebra \(R\), it is clear that \((G(R), m_R, B_R)\) is an RB group by (3) in Example \ref{prelim examples}. We now prove the naturality of \(B\). Let \(\varphi \colon R\to S\) be any morphism in \(\mathsf{Alg}_k\). For any \(g = h \ell\in G(R)\) with \(h\in H(R), \ell\in L(R)\), the functoriality of \(G\) gives
\[
G(\varphi)(g) = G(\varphi)(h)G(\varphi)(\ell)=H(\varphi)(h) L(\varphi)(\ell).
\]
Moreover, since $H$ and $L$ are subfunctors of $G$, we have $G(\varphi)(h)=H(\varphi)(h) \in H(S)$ and $G(\varphi)(\ell)=L(\varphi)(\ell) \in L(S)$. By assumption, this is the unique factorization of \(G(\varphi)(g)\) in \(G(S)\). Then
\[
B_S\big(G(\varphi)(g)\big) = G(\varphi)(\ell)^{-1},
\]
while
\[
G(\varphi)\big(B_R(g)\big) = G(\varphi)(\ell^{-1}) = G(\varphi)(\ell)^{-1}.
\]
Hence \(B_S\circ G(\varphi) = G(\varphi)\circ B_R\), meaning \(B\) is a natural transformation.
\end{proof}

\begin{example}[Triangular decomposition]
Define the affine group \(G\) as follows: for each \(k\)-algebra \(R\), \(G(R)\) consists of all invertible upper triangular \(n\times n\) matrices with entries in \(R\). This is an affine algebraic group, represented by \(k[X_{ij}, \det^{-1}]\) with relations \(X_{ij}=0\) for all \(i>j\).

For each \(k\)-algebra \(R\), let \(H(R)\) be the subgroup of \(G(R)\) consisting of all invertible diagonal matrices. That is, a matrix in \(H(R)\) has the form \(\text{diag}(a_1,\dots,a_n)\) where each \(a_i\) is a unit in \(R\). Let \(L(R)\) be the subgroup of \(G(R)\) consisting of all upper triangular matrices with $1_R$ on the diagonal.

Then we have an exact factorization \(G(R) = H(R) L(R)\) and the intersection \(H(R) \cap L(R)\) contains only the identity matrix \(I\). Define \(B_R(g) = \ell^{-1}\) for any \(g \in G(R)\), where \(g = h \ell\) is the unique factorization of \(g\) with \(h \in H(R)\) and \(\ell \in L(R)\). One can chect that $B_R$ is natural in $R$. Then \((G, B)\) is an affine algebraic Rota--Baxter group. \qedhere
\end{example}

\subsection{Affine Rota--Baxter groups and Co-Rota--Baxter Hopf algebras}
First we recall the notion of Rota--Baxter co-operators on commutative Hopf algebras.

\begin{definition}{\rm (\cite{ZZL24})}
Let \((H, \mu, \eta, \Delta, \varepsilon, S)\) be a commutative Hopf algebra. An algebra map \(\mathcal{B} : H \to H\) is called a \textbf{Rota--Baxter co-operator} on \(H\) if for all \(x \in H\),
\begin{equation}
\label{RBcooperator}
    \mathcal{B}(x_1) \otimes \mathcal{B}(x_2) = \mathcal{B}(x)_1\mathcal{B}(\mathcal{B}(x)_2S(\mathcal{B}(x)_4)) \otimes \mathcal{B}(x)_3.
\end{equation}
Here we use Sweedler's notation: $\Delta(x)=x_1 \otimes x_2$ and $\Delta^{(3)}(x)=x_1 \otimes x_2 \otimes x_3 \otimes x_4$. By a \textbf{co-Rota--Baxter Hopf algebra} (or simply \textbf{co-RB Hopf algebra}) we mean a pair \((H, \mathcal{B})\) consisting of a commutative Hopf algebra \(H\) and a Rota--Baxter co-operator \(\mathcal{B}\) on \(H\).
\end{definition}

\begin{definition}
    A $\mathbf{homomorphism}$ \(\varphi: (H, \mathcal{B}_H) \to (K, \mathcal{B}_K)\) of co-Rota--Baxter Hopf algebras is a Hopf algebra homomorphism satisfying \(\varphi \circ \mathcal{B}_H =\mathcal{B}_K \circ \varphi\).
\end{definition}
Thus, co-RB Hopf algebras over \(k\) together with their homomorphisms form a category, denoted by \(\mathsf{CoRBHopf}_k\).

\begin{theorem}
\label{affineRBG RBHopf THM}
    Let $(H,\mu,\eta ,\Delta, \varepsilon, S)$ be a commutative Hopf algebra. If $(H,\mathcal{B})$ is co-Rota--Baxter Hopf algebra, then $(h^H,h^\Delta,h^{\mathcal{B}})$ is an affine Rota--Baxter group. Conversely, if $(G,m,B)$ is an affine Rota--Baxter group, $\mathcal{O}(G,m,B)$ is a co-Rota--Baxter Hopf algebra.
\end{theorem}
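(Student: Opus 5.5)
The plan is to use Proposition~\ref{Prop affinegroup equivalent  to Hopf} and Remarks~\ref{Hom on affine group} and~\ref{Hom RBGroup} to reduce everything to one statement. For a commutative Hopf algebra $H$ and an algebra map $\mathcal{B}\colon H\to H$, I will show that $B=h^{\mathcal{B}}$, given by $B_R(f)=f\circ\mathcal{B}$, satisfies the Rota--Baxter identity in every group $h^H(R)$ if and only if $\mathcal{B}$ satisfies \eqref{RBcooperator}. Once this equivalence is in hand, both directions of the theorem follow directly.

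\emph{Forward direction.} Suppose $(H,\mathcal{B})$ is co-RB. Then $h^H$ with $h^\Delta$ is an affine group by Proposition~\ref{Prop affinegroup equivalent  to Hopf}, and $h^{\mathcal{B}}$ is automatically natural. \emph{Converse.} Suppose $(G,m,B)$ is an affine RB group with $H=\mathcal{O}(G)$. Then $H$ is a commutative Hopf algebra, and $B$ corresponds to an algebra map $\mathcal{B}$ as in Remark~\ref{Hom RBGroup}. In both cases it therefore suffices to translate the RB identity into an identity on $H$.

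\emph{Translating the two sides.} Take $f,g\in \operatorname{Hom}(H,R)$. Using Remark~\ref{Hom on affine group}, the left-hand side is
\[
B_R(f)B_R(g)=\mu_R\circ(f\otimes g)\circ(\mathcal{B}\otimes\mathcal{B})\circ\Delta,
\]
which evaluated at $x$ gives $f(\mathcal{B}(x_1))\,g(\mathcal{B}(x_2))$. For the right-hand side, I evaluate $B_R\big(fB_R(f)gB_R(f)^{-1}\big)$ at $x$ by setting $y=\mathcal{B}(x)$ and applying $\Delta^{(3)}$ to $y$. This gives
\[
f(y_1)\,f(\mathcal{B}(y_2))\,g(y_3)\,f(\mathcal{B}(S(y_4))).
\]
Since $\mathcal{B}$ and $f$ are algebra maps and $R$ is commutative, this collapses to $f\big(y_1\mathcal{B}(y_2 S(y_4))\big)\,g(y_3)$. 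A small point to check here: \eqref{RBcooperator} is written with $\mathcal{B}(y_2S(y_4))$, so I use $\mathcal{B}(y_2)\mathcal{B}(S(y_4))=\mathcal{B}(y_2S(y_4))$, which holds because $\mathcal{B}$ is multiplicative. Both sides therefore have the form $\mu_R\circ(f\otimes g)\circ T$ for the two maps $T_1,T_2\colon H\to H\otimes H$ given by the two sides of \eqref{RBcooperator}.

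\emph{Main obstacle: recovering the identity on $H$.} If \eqref{RBcooperator} holds, the two sides agree for all $f,g$, which gives the forward direction. For the converse I must pass from ``equal after every $\mu_R\circ(f\otimes g)$'' back to $T_1=T_2$. I would take the universal case $R=H\otimes H$, $f=\iota_1$, $g=\iota_2$. Then $\mu_R\circ(\iota_1\otimes\iota_2)$ is the identity on $H\otimes H$, so equality of the two RB sides at this single pair yields $T_1=T_2$ exactly. This is the Yoneda argument applied to $h^H\times h^H\cong h^{H\otimes H}$. Getting this step right also requires that the Sweedler indexing matches the order of composition in $m^{(3)}$ from \eqref{phi(n):A(n+1) to A}, since $m^{(3)}$ corresponds to $\Delta^{(3)}$ by coassociativity. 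That bookkeeping check is the one I would carry out carefully.
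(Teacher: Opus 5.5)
Your proposal is correct and follows essentially the same route as the paper. The paper translates the Rota--Baxter diagram for $h^H$ into the corresponding diagram of algebra maps on $H$ via the Yoneda anti-equivalence between $\mathsf{RepFunc}[\mathsf{Alg}_k,\mathsf{Set}]$ and $\mathsf{Alg}_k$; your pointwise computation, together with the universal pair $(\iota_1,\iota_2)\in h^H(H\otimes H)\times h^H(H\otimes H)$, is exactly that translation made explicit, and it also carries out the Sweedler bookkeeping and the identity $\mathcal{B}(y_2)\mathcal{B}(S(y_4))=\mathcal{B}(y_2S(y_4))$ that the paper's ``equivalent to \eqref{RBcooperator}'' leaves to the reader.
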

\begin{proof}
    Let $(G,m,B)$ be an affine RB group with natural transformations $e$ and $\operatorname{inv}$. Then
    \[
    G\times G \simeq h^{\mathcal{O}(G)} \times h^{\mathcal{O}(G)} \simeq h^{\mathcal{O}(G) \otimes \mathcal{O}(G)}.
    \]
Moreover $m$, $e$, $\operatorname{inv}$ and $B$ correspond to comultiplication $\Delta$, counit $\varepsilon$, antipode $S$ and an algebra morphism $\mathcal{B}$, respectively, such that $(\mathcal{O}(G), \Delta, \varepsilon, S)$ is a commutative Hopf algebra. All that remains is to check that \((\mathcal{O}(G), \mathcal{B})\) satisfies the equation \eqref{RBcooperator}. According to Theorem \ref{affineRBgroup THM}, $(G,m,B)$ is an RB group object in $\mathsf{RepFunc}[\mathsf{Alg}_k, \mathsf{Set}]$, which implies the following commutative diagram in $\mathsf{RepFunc}[\mathsf{Alg}_k, \mathsf{Set}]$:
   \begin{equation}
   \label{G affineRBGroup}
    \begin{tikzcd}[column sep = 2.5cm, row sep = huge]
     G^{\times 2} \ar[r,"B\times B"] \ar[d,"{\langle \pi_1, \pi_1, \pi_2, \pi_1 \rangle}"]  & G^{\times 2} \ar[r,"m"]  & G \\G^{\times 4} \ar[r,"\mathrm{id} \times B \times \mathrm{id} \times (\mathrm{inv} \circ B)"]  & G^{\times 4} \ar[r,"m^{(3)}"] & G \ar[u,"B"],
    \end{tikzcd}
   \end{equation}
where $m^{(3)}$ is given by \eqref{phi(n):A(n+1) to A}.
As the contravariant equivalence between $\mathsf{RepFunc}[\mathsf{Alg}_k, \mathsf{Set}]$ and $\mathsf{Alg}_k$, Diagram \eqref{G affineRBGroup} is equivalent to the following commutative diagram in $\mathsf{Alg}_k$:
   \begin{equation}
   \label{O(G)RBHOPFalg}
    \begin{tikzcd}[column sep = 2cm, row sep = huge]
     \mathcal{O}(G)^{\otimes 2} & \mathcal{O}(G)^{\otimes 2} \ar[l,"\mathcal{B}\otimes \mathcal{B}"]  & \mathcal{O}(G) \ar[l,"\Delta"] \ar[d, "\mathcal{B}"]
     \\\mathcal{O}(G)^{\otimes 4} \ar[u,"\mu_{\mathcal{O}(G) \otimes \mathcal{O}(G)}^{(3)} \circ (\iota_1 \otimes \iota_1 \otimes \iota_2 
     \otimes \iota_1)"]  & \mathcal{O}(G)^{\otimes 4} \ar[l,"\mathrm{id} \otimes \mathcal{B} \otimes \mathrm{id} \otimes (\mathcal{B}\circ S)"]  & \mathcal{O}(G) \ar[l,"\Delta^{(3)}"] ,
    \end{tikzcd}
   \end{equation}
where $\Delta^{(3)}$ is given by \eqref{psi(n):A to A(n+1)} and $\mu_{\mathcal{O}(G) \otimes \mathcal{O}(G)}^{(3)}$ is given by \eqref{psi(n):A(n+1) to A}. Here $\mu_{\mathcal{O}(G) \otimes \mathcal{O}(G)}: \mathcal{O}(G)^{\otimes 4} \to \mathcal{O}(G)^{\otimes 2}$ denotes the multiplication of $\mathcal{O}(G) \otimes \mathcal{O}(G)$, which is induced by the multiplication of $\mathcal{O}(G)$ and
\[
\left\{
\begin{aligned}
    \mu_{\mathcal{O}(G) \otimes \mathcal{O}(G)}^{(3)} \circ (\iota_1 \otimes \iota_1 \otimes \iota_2 
     \otimes \iota_1)(a\otimes b \otimes c \otimes d)&=abd \otimes c,\\
    \mathrm{id} \otimes \mathcal{B} \otimes \mathrm{id} \otimes (\mathcal{B}\circ S)(a\otimes b \otimes c \otimes d)&=a \otimes \mathcal{B}(b) \otimes c \otimes \mathcal{B}S(d).\\
\end{aligned}
\right.
\]
So Diagram \eqref{O(G)RBHOPFalg} is equivalent to the equation \eqref{RBcooperator}. Therefore, $(\mathcal{O}(G), \mathcal{B})$ is a co-RB Hopf algebra.

Conversely, let $(H,\mu,\eta ,\Delta, \varepsilon, S)$ be a commutative Hopf algebra and $(H,\mathcal{B})$ be a co-RB Hopf algebra. Then $\Delta:H \to H \otimes H$ and $\mathcal{B}:H \to H$ correspond to natural transformations $m:=h^\Delta: h^H \times h^H \to h^H$ and $B:=h^{\mathcal{B}}: h^H \to h^H$ such that $(h^H, m)$ is an affine group. We will show that $(h^H,m, B)$ is an affine RB group. Note that the equation \eqref{RBcooperator} is equivalent to the following commutative diagram in $\mathsf{Alg}_k$:
\begin{equation}
\label{H is RBHalg}
    \begin{tikzcd}[column sep = 2.5cm, row sep = huge]
     H^{\otimes 2} & H^{\otimes 2} \ar[l,"\mathcal{B}\otimes \mathcal{B}"]  & H \ar[l,"\Delta"] \ar[d, "\mathcal{B}"]
     \\H^{\otimes 4} \ar[u,"\mu_{H\otimes H}^{(3)} \circ (\iota_1 \otimes \iota_1 \otimes \iota_2 \otimes \iota_1)"]  & H^{\otimes 4} \ar[l,"\mathrm{id} \otimes \mathcal{B} \otimes \mathrm{id} \otimes (\mathcal{B} \circ S)"]  & H \ar[l,"\Delta^{(3)}"] ,
    \end{tikzcd}
   \end{equation}
where $\mu_{H\otimes H}^{(3)}$ is given by \eqref{psi(n):A(n+1) to A} and $\Delta^{(3)}$ is given by \eqref{psi(n):A to A(n+1)}. By Yoneda lemma Diagram \eqref{H is RBHalg} is equivalent to the following commutative diagram in $\mathsf{RepFunc}[\mathsf{Alg}_k, \mathsf{Set}]$:
\begin{equation}
    \begin{tikzcd}[column sep = 2.5cm, row sep = huge]
     (h^H)^{\times 2} \ar[r,"B\times B"] \ar[d,"{\langle \pi_1, \pi_1, \pi_2, \pi_1 \rangle}"]  & (h^H)^{\times 2} \ar[r,"m"]  & h^H \\(h^H)^{\times 4} \ar[r,"{\mathrm{id} \times B \times \mathrm{id} \times (\mathrm{inv} \circ B)}"]  & (h^H)^{\times 4} \ar[r,"m^{(3)}"] & h^H \ar[u,"B"] ,
    \end{tikzcd}
   \end{equation}
where $m^{(3)}$ is given by \eqref{phi(n):A(n+1) to A}.
Thus, $(h^H,m, B)$ is an RB group object in $\mathsf{RepFunc}[\mathsf{Alg}_k, \mathsf{Set}]$, i.e., $(h^H,m,B)$ is an affine RB group.
\end{proof}

\begin{theorem}
\label{coRBHopf = AffRBgroup category equivalence}
The above correspondence is functorial and yields a contravariant equivalence of categories:
\[
\mathsf{CoRBHopf}^{\mathrm{opp}}_k \simeq \mathsf{AffRBGrp}_k.
\]
\end{theorem}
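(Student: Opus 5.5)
We will build the equivalence on top of Proposition~\ref{Prop affinegroup equivalent  to Hopf}, which already gives $\mathsf{AffGrp}_k \simeq \mathsf{Hopf}_k^{\mathrm{opp}}$ via $H \mapsto (h^H, h^\Delta)$ and $G \mapsto \mathcal{O}(G)$. On objects, Theorem~\ref{affineRBG RBHopf THM} extends this to a functor $\Psi:\mathsf{CoRBHopf}_k^{\mathrm{opp}} \to \mathsf{AffRBGrp}_k$, $(H,\mathcal{B}) \mapsto (h^H, h^\Delta, h^{\mathcal{B}})$, and a functor $\Theta$ in the other direction, $(G,m,B)\mapsto (\mathcal{O}(G),\mathcal{B})$, where $\mathcal{B}$ is the algebra map of Remark~\ref{Hom RBGroup}. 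What remains is to define both functors on morphisms, check that they land in the right categories, and show they are mutually quasi-inverse.

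\emph{Morphisms under $\Psi$.} Let $\varphi:(H,\mathcal{B}_H)\to(K,\mathcal{B}_K)$ be a co-RB Hopf homomorphism. Set $\Psi(\varphi)=h^\varphi: h^K\to h^H$, given by $f\mapsto f\circ\varphi$. Since $\varphi$ is a Hopf map, $h^\varphi$ is an affine group homomorphism by the classical equivalence. For the RB condition, for $f\in h^K(R)$ we compute
\[
h^\varphi\bigl(h^{\mathcal{B}_K}(f)\bigr)=f\circ\mathcal{B}_K\circ\varphi = f\circ\varphi\circ\mathcal{B}_H = h^{\mathcal{B}_H}\bigl(h^\varphi(f)\bigr),
\]
which is exactly the second square in Definition~\ref{def affRBG homomorphism}. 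Functoriality of $\Psi$ (identities and composition) comes directly from functoriality of $h^{(-)}$.

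\emph{Morphisms under $\Theta$.} Let $\Phi:(G,B_G)\to(G',B_{G'})$ be a morphism of affine RB groups. Fix identifications $G\cong h^{\mathcal{O}(G)}$ and $G'\cong h^{\mathcal{O}(G')}$. By the Yoneda lemma, $\Phi=h^{\phi}$ for a unique algebra map $\phi:\mathcal{O}(G')\to\mathcal{O}(G)$, and $\phi$ is a Hopf map by Proposition~\ref{Prop affinegroup equivalent  to Hopf}. The identity $\Phi\circ B_G=B_{G'}\circ\Phi$ translates to $h^{\phi}\circ h^{\mathcal{B}_G}=h^{\mathcal{B}_{G'}}\circ h^{\phi}$, that is, $h^{\mathcal{B}_G\circ\phi}=h^{\phi\circ\mathcal{B}_{G'}}$. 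Faithfulness of Yoneda then gives $\mathcal{B}_G\circ\phi=\phi\circ\mathcal{B}_{G'}$, so $\phi$ is a co-RB Hopf homomorphism $\mathcal{O}(G')\to\mathcal{O}(G)$.

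\emph{Quasi-inverse.} Running the same argument in both directions shows that $\Psi$ is fully faithful: co-RB Hopf morphisms $H\to K$ correspond bijectively to affine RB morphisms $h^K\to h^H$, because Yoneda gives a bijection on the underlying maps and the RB compatibility conditions correspond to each other. For essential surjectivity, take an affine RB group $(G,m,B)$. Choose the isomorphism $\alpha:G\cong h^{\mathcal{O}(G)}$ used to define $\Delta$ and $\mathcal{B}$. By construction $\alpha$ intertwines $m$ with $h^\Delta$ and $B$ with $h^{\mathcal{B}}$, so $\alpha$ is an isomorphism $(G,m,B)\cong\Psi(\Theta(G))$ in $\mathsf{AffRBGrp}_k$; naturality of $\alpha$ in $G$ follows from the morphism construction above. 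Hence $\Psi$ is an equivalence with quasi-inverse $\Theta$.

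The only point needing care is this last step: checking that the chosen representing isomorphisms transport $B$ to $h^{\mathcal{B}}$, and doing so coherently in $G$. This is essentially the definition of $\mathcal{B}$ in Remark~\ref{Hom RBGroup}. The cleanest way to handle it is to argue through fully faithful plus essentially surjective, rather than constructing the natural isomorphism $\Theta\Psi\cong\mathrm{id}$ explicitly.
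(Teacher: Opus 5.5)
Your proposal is correct and follows essentially the same route as the paper. Both arguments lift the classical anti-equivalence $\mathsf{AffGrp}_k \simeq \mathsf{Hopf}_k^{\mathrm{opp}}$, and both use the Yoneda lemma to show that the condition $\Phi\circ B_G = B_{G'}\circ\Phi$ on natural transformations is equivalent to $\mathcal{B}_G\circ\phi=\phi\circ\mathcal{B}_{G'}$ on the corresponding Hopf maps. The only difference is in packaging: you conclude via fully faithful plus essentially surjective, with explicit transport of structure along $G\cong h^{\mathcal{O}(G)}$ and an explicit appeal to faithfulness of Yoneda, whereas the paper simply asserts the natural isomorphisms $h^{\mathcal{O}(-)}\cong\mathrm{id}$ and $\mathcal{O}(h^{(-)})\cong\mathrm{id}$.
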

\begin{proof}
Note that the functors
\[
\mathcal{O}: \mathsf{AffRBGrp}_k \longrightarrow \mathsf{CoRBHopf}^{\operatorname{opp}}_k, \qquad (G,m, B) \longmapsto \mathcal{O}(G,m,B)=\big(\mathcal{O}(G),\mathcal{O}(m),\mathcal{O}(B) \big)
\]
and
\[
H \rightsquigarrow h^H: \mathsf{CoRBHopf}_k \longrightarrow \mathsf{AffRBGrp}^{\operatorname{opp}}_k, \qquad (H, \Delta,\mathcal{B}) \longmapsto (h^H,h^\Delta, h^{\mathcal{B}})
\]
form an anti-equivalence of categories. 

We first verify that \(\mathcal{O}\) and \(H \rightsquigarrow h^H\) are well-defined functors. For an affine Rota--Baxter group \((G,m, B)\), the coordinate ring \(\mathcal{O}(G)\) is a co-RB Hopf algebra by Theorem \ref{affineRBG RBHopf THM}. 
For a homomorphism \(\Psi: (G,m_G, B_G) \to (H,m_H, B_H)\) of affine RB groups, the Yoneda lemma together with Proposition \ref{Prop affinegroup equivalent  to Hopf} imply the existence of a unique Hopf algebra homomorphism
\[
\psi: \mathcal{O}(H) \longrightarrow \mathcal{O}(G)
\]
corresponding to \(\Psi\). Moreover, the condition \(B_H \circ \Psi = \Psi \circ B_G\) implies that
\[
\psi \circ \mathcal{O}(B_H) = \mathcal{O}(B_G) \circ \psi.
\]
Thus \(\psi\) is a homomorphism of co-RB Hopf algebras. The functoriality conditions
\[
\mathcal{O}(\operatorname{id}_G) = \operatorname{id}_{\mathcal{O}(G)}, \qquad \mathcal{O}(g \circ f) = \mathcal{O}(f) \circ \mathcal{O}(g) 
\]
are immediate consequences of the Proposition \ref{Prop affinegroup equivalent  to Hopf}.
Thus the correspondence defines a functor \(\mathcal{O}: \mathsf{AffRBGrp}_k \to \mathsf{CoRBHopf}^{\operatorname{opp}}_k\). 

Conversely, for a co-RB Hopf algebra \((H,\Delta, \mathcal{B})\), we have an affine Rota--Baxter group \((h^H,h^\Delta, h^{\mathcal{B}})\) by Theorem \ref{affineRBG RBHopf THM}. 
A co-RB Hopf algebra homomorphism
\(\theta: (H, \Delta_H, \mathcal{B}_H) \to (K, \Delta_K, \mathcal{B}_K)\)
is a Hopf algebra homomorphism \(\theta: H \to K\) such that
\[
\mathcal{B}_K \circ \theta = \theta \circ \mathcal{B}_H.
\]
Thus $\theta$ induces a natural transformation \(\Theta=h^{\theta}: h^K \to h^H\) where 
\[
\Theta_R: h^K(R) \to h^H(R), \quad f \mapsto f \circ \theta.
\]
Then we have $\Theta_R \circ (h^{\mathcal{B}_K})_R (f)=f\circ \mathcal{B}_K \circ \theta = f \circ \theta \circ \mathcal{B}_H = (h^{\mathcal{B}_H})_R \circ \Theta_R (f)$, namely, 
\[
\Theta \circ h^{\mathcal{B}_K} = h^{\mathcal{B}_H} \circ \Theta.
\]
So $H \rightsquigarrow h^H$ is also well-defined.

Moreover, we have natural isomorphisms \(H\rightsquigarrow h^H\circ\mathcal{O} \cong \operatorname{id}\) and \(\mathcal{O} \circ H\rightsquigarrow h^H \cong \operatorname{id}\) by Proposition \ref{Prop affinegroup equivalent  to Hopf} and Yoneda lemma. This gives a contravariant equivalence of categories.
\end{proof}

\subsection{Descendant affine groups and descendant Hopf algebras}
Given an affine RB group \((G, m, e, \operatorname{inv}, B)\), we have constructed its descendant affine group \((G, m^B, e, \operatorname{inv}^B)\) in Proposition~\ref{descend affine RBG}. This yields a new affine group structure on the same underlying functor \(G\), thereby inducing a new commutative Hopf algebra associated with the original co-RB Hopf algebra \((H, \mathcal{B})\). The corresponding comultiplication and antipode, denoted by \(\Delta_{\mathcal{B}}\) and \(S_{\mathcal{B}}\), are corresponding to the natural transformations \(m^B\) and \(\operatorname{inv}^B\), respectively.

We now obtain explicit expressions for \(\Delta_{\mathcal{B}}\) and \(S_{\mathcal{B}}\) by dualizing the corresponding diagrams \eqref{m^B diagram} and \eqref{inv^B diagram} via the Yoneda lemma:
    \begin{equation}
    \label{Delta_B def}
     \begin{tikzcd}[column sep = 4cm, row sep = huge]
     H^{\otimes 2}  
     & H \ar[l, "\Delta_{\mathcal{B}}"] \ar[d, "\Delta^{(3)}"]
     \\
     H^{\otimes 4} \ar[u, "\mu_{H\otimes H}^{(3)} \circ (\iota_1 \otimes \iota_1 \otimes \iota_2 \otimes \iota_1)"]
     & H^{\otimes 4} \ar[l, "\operatorname{id} \otimes \mathcal{B} \otimes \operatorname{id} \otimes (\mathcal{B} \circ S)"] 
     \end{tikzcd}
    \end{equation}
and
\begin{equation}
\label{S_B def}
     \begin{tikzcd}[column sep = 4cm, row sep = huge]
     H 
     & H \ar[l, "S_{\mathcal{B}}"] \ar[d, "\Delta^{(2)}"]
     \\
     H^{\otimes 3} \ar[u, "\mu_H^{(2)} \circ (\operatorname{id}_H \otimes \operatorname{id}_H \otimes \operatorname{id}_H)"]
     & H^{\otimes 3} \ar[l, "(\mathcal{B} \circ S) \otimes S \otimes \mathcal{B}"] .
     \end{tikzcd}
    \end{equation}
Here $\mu_{H\otimes H}^{(3)}$ and $\mu^{(2)}_H$ are given by \eqref{psi(n):A(n+1) to A}, and $\Delta^{(2)}$ and $\Delta^{(3)}$ are given by \eqref{psi(n):A to A(n+1)}.

The commutativity of the above diagrams implies that
\begin{equation}
\label{descendant comul}
    \Delta_{\mathcal{B}}(x) = x_1 \mathcal{B}(x_2) \mathcal{B}(S(x_4)) \otimes x_3
\end{equation}
and
\begin{equation}
\label{descedant antipo}
    S_{\mathcal{B}}(x)=\mathcal{B}(S(x_1)) S(x_2) \mathcal{B}(x_3).
\end{equation}

\begin{definition}
\label{def descendant hopf alg}
    Let $(H,\mu,\eta,\Delta,\varepsilon,S,\mathcal{B})$ be a co-RB Hopf algebra. Then $H_\mathcal{B}:=(H,\mu,\eta,\Delta_\mathcal{B},\varepsilon, S_\mathcal{B})$, where $\Delta_\mathcal{B}$ and $S_\mathcal{B}$ are given by \eqref{descendant comul} and \eqref{descedant antipo}, is called the \textbf{descendant Hopf algebra}.
\end{definition}

\begin{remark}
    Proposition \ref{descend affine RBG} shows that $B$ is a Rota--Baxter operator on the descendant affine group $(G,m^B,e,\operatorname{inv}^B)$, which implies that $\mathcal{B}$ is again a Rota--Baxter co-operator on $H_\mathcal{B}$.
\end{remark}

In summary, let $(G,m^B,e,\operatorname{inv}^B)$ be the descendant affine group of the affine RB group $(G,m,e,\operatorname{inv},B)$. Then $\mathcal{O}(G,m^B)$ is called the descendant Hopf algebra of the co-RB Hopf algebra $\mathcal{O}(G,m)$. Conversely, let $H_{\mathcal{B}}$ be the descendant Hopf algebra of the co-RB Hopf algebra $(H, \mu, \eta, \Delta, \varepsilon, S,\mathcal{B})$. Then $(h^H, h^{\Delta_{\mathcal{B}}})$ is the descendant affine group of the affine RB group $(h^H,h^{\Delta}, h^{\mathcal{B}})$.

\begin{remark}
There is another way to construct a new Hopf algebra from a co-RB Hopf algebra in \cite{ZZL24}. By adopting a categorical viewpoint, the method presented in our paper avoids lengthy computations.
\end{remark}

\section{Affine (skew) braces and Hopf co-braces}\label{sec:brace}
The notion of an affine brace was introduced by Angiono, Galindo, and Vendramin \cite{AGV17}. In their definition, the first group operation is not required to be abelian. In this paper, we adopt a more refined terminology: we call such a structure an \textbf{affine skew brace}, and reserve the term \textbf{affine brace} for the special case where the first group is abelian. Accordingly, the correspondence of \cite{AGV17} can be rephrased as: affine skew braces correspond to commutative Hopf co-braces. As a special case, we then obtain that affine braces correspond to commutative Hopf co-braces whose first comultiplication is cocommutative.

\subsection{(Skew) brace objects}
\begin{definition}
    A \textbf{skew brace object} in \(\mathcal{C}\) is a triple \((A, m_\bullet, m_\circ)\) consisting of 
    an object \(A \in \mathcal{C}\) and two morphisms
    \[
    m_\bullet : A \times A \to A \quad\text{and}\quad m_\circ : A \times A \to A
    \]
    such that the following conditions hold:
    \begin{enumerate}
        \item[\rm(1)] \((A, m_\bullet)\) is a group object in \(\mathcal{C}\). 
              We denote its unit by \(e_\bullet : * \to A\) and its inverse by 
              \(\mathrm{inv}_\bullet : A \to A\).
        \item[\rm(2)] \((A, m_\circ)\) is a group obejct in \(\mathcal{C}\). 
              We denote its unit by \(e_\circ : * \to A\) and its inverse by 
              \(\mathrm{inv}_\circ : A \to A\).
        \item[\rm(3)] \((\textrm{Compatibility condition}) \) The following diagram commutes:
       \begin{equation}
       \label{skewbracediagram}
           \begin{tikzcd}[column sep=5cm, row sep=huge]
            A^{\times 3} 
            \ar[r,"{\big\langle m_\circ \circ \langle \pi_1,\pi_2\rangle,\;\mathrm{inv}_\bullet \circ \pi_1,\; m_\circ \circ \langle \pi_1,\pi_3\rangle \big\rangle}"]
            \ar[d,"\operatorname{id}_A \times m_\bullet"]
            & A^{\times 3} \ar[d,"m_\bullet^{(2)}"] \\
            A^{\times 2} \ar[r,"m_\circ"] & A ,
           \end{tikzcd}
       \end{equation}
    \end{enumerate}
    where $m_\bullet^{(2)}$ is given by \eqref{phi(n):A(n+1) to A}.
    Here \(\pi_i : A^{\times 3}\) (\(i = 1,2,3\)) denote the canonical projections.
    When all structure morphisms are needed, we write the septuple $(A, m_\bullet, e_\bullet, \operatorname{inv}_\bullet, m_\circ, e_\circ, \operatorname{inv}_\circ)$.
\end{definition}

\begin{remark}
    The compatibility condition expressed by the diagram above is equivalent to the equality of 
    the following two composite morphisms from \(A ^{\times 3}\) to \(A\):
    \[
    m_\circ \circ (\operatorname{id}_A \times m_\bullet) 
    = 
    m_\bullet \circ (m_\bullet \times \mathrm{id}_A) \circ \big\langle m_\circ \circ \langle \pi_1, \pi_2\rangle,\; \mathrm{inv}_\bullet \circ \pi_1,\; m_\circ \circ \langle \pi_1, \pi_3\rangle \big\rangle .
    \]
    This is precisely the categorical translation of the defining equation of a skew brace
    in the category of sets:
    \[
    a \circ (b \cdot c) = (a \circ b) \cdot a^{-1} \cdot (a \circ c),
    \]
    where \(\cdot\) corresponds to \(m_\bullet\), \(\circ\) corresponds to \(m_\circ\), and 
    \(a^{-1} = \mathrm{inv}_\bullet(a)\).
\end{remark}

If, in addition, the first group object \((A, m_\bullet)\) is required to be abelian, we obtain the definition of a brace object.

\begin{definition}
A \textbf{brace object} in \(\mathcal{C}\) is a skew brace object in $\mathcal{C}$ denoted by \((A, m_\bullet, m_\circ)\) and additionally \((A, m_\bullet)\) is an \textbf{abelian group object}, i.e. the following diagram commutes:
\begin{equation}
\begin{tikzcd}
A \times A \arrow[r, "\tau"] \arrow[d, "m_\bullet"'] & A \times A \arrow[d, "m_\bullet"] \\
A \arrow[r, "\mathrm{id}_A"] & A ,
\end{tikzcd}
\end{equation}
where \(\tau : A \times A \to A \times A\) is the swap, i.e., \(\tau = \langle\pi_2, \pi_1\rangle \).
\end{definition}

Our framework provides a uniform categorical treatment of various existing structures.

\begin{example}
    A (skew) brace object in $\mathsf{Set}$ is exactly a (skew) brace.
\end{example}

\begin{example}
A skew brace object in the category of smooth manifolds consists of \(G\) together with two smooth group structures \((G, \cdot)\) and \((G, \circ)\) such that the compatibility condition holds. This is precisely the definition of a \textbf{Lie skew brace} as introduced in \cite{DL26}.
\end{example}

\subsection{Affine (skew) braces}
\begin{definition}{\rm (\cite{AGV17})}
    An \textbf{affine (skew) brace scheme over $k$} (or simply an \textbf{affine skew brace}) is a representable functor $A \colon \mathsf{Alg}_k \to \mathsf{Set}$ together with two natural transformations \(m_\bullet, m_\circ : \mathcal{A} \times \mathcal{A} \to \mathcal{A}\) such that, for all $R$ in $\mathsf{Alg}_k$, 
\[
{m_\bullet}_R : \mathcal{A}(R) \times \mathcal{A}(R) \to \mathcal{A}(R),\quad (a, b) \mapsto a \bullet_R b,
\]
and
\[
{m_\circ}_R : \mathcal{A}(R) \times \mathcal{A}(R) \to \mathcal{A}(R),\quad (a, b) \mapsto a \circ_R b,
\]
   make \(\big(\mathcal{A}(R), \bullet_R, \circ_R \big)\) into a (skew) brace. If $A$ is represented by a finitely generated $k$-algebra, then it is called an \textbf{affine algebraic (skew) brace}.
\end{definition}

\begin{remark}
The affine skew brace defined above corresponds to the affine brace \cite[Definition 5.1]{AGV17}. We therefore reserve the term "affine brace" specifically for the abelian setting.
\end{remark}

\begin{definition}
Let \((\mathcal{A}, m_\bullet, m_\circ)\) and \((\mathcal{A}', m_\bullet', m_\circ')\) be affine (skew) braces over \(k\). A \textbf{homomorphism} \(\Phi: \mathcal{A} \to \mathcal{A}'\) is a natural transformation such that for every \(k\)-algebra \(R\), the map
\[
\Phi_R: \mathcal{A}(R) \longrightarrow \mathcal{A}'(R)
\]
is a homomorphism of (skew) braces. Equivalently, the following diagrams commute:
\begin{equation}
\begin{tikzcd}
\mathcal{A} \times \mathcal{A} \arrow[r, "m_\bullet"] \arrow[d, "\Phi \times \Phi"'] & \mathcal{A} \arrow[d, "\Phi"] \\
\mathcal{A}' \times \mathcal{A}' \arrow[r, "m_\bullet'"] & \mathcal{A}'
\end{tikzcd}
\qquad
\begin{tikzcd}
\mathcal{A} \times \mathcal{A} \arrow[r, "m_\circ"] \arrow[d, "\Phi \times \Phi"'] & \mathcal{A} \arrow[d, "\Phi"] \\
\mathcal{A}' \times \mathcal{A}' \arrow[r, "m_\circ'"] & \mathcal{A}' .
\end{tikzcd}
\end{equation}
We say $\Phi$ is {\bf injective} if $\Phi_R$ is injective for all $k$-algebras $R$. An {\bf embedding} is an injective homomorphism.
\end{definition}

Thus, affine (skew) braces over \(k\) together with their homomorphisms form a category, denoted by \(\mathsf{AffSBr}_k\) (resp. \(\mathsf{AffBr}_k\)).  

\begin{definition}
    Let \((\mathcal{A}, m_\bullet, m_\circ)\) be an affine (skew) brace over \(k\). If $R\in \mathsf{Alg}_k$ represents \(\mathcal{A}\), then \(R\) is called the \textbf{coordinate ring} of \(\mathcal{A}\) and is denoted by \(\mathcal{O}(\mathcal{A})\) or \(\mathcal{O}(\mathcal{A}, m_\bullet, m_\circ)\).
\end{definition}

The following theorem gives equivalent characterizations of affine (skew) braces.

\begin{theorem}
\label{affine (skew) brace equivalent thm}
    The following are equivalent:
\begin{enumerate}
    \item[\rm(1)] \((\mathcal{A}, m_\bullet, m_\circ)\) is an affine (skew) brace over $k$.
    \item[\rm(2)] \((\mathcal{A}, m_\bullet, m_\circ)\) is a (skew) brace object in $\mathsf{RepFunc}[\mathsf{Alg}_k,\mathsf{Set}]$.
    \item[\rm(3)] A functor $\mathcal{A}$ from $\mathsf{Alg}_k$ to the category of (skew) braces whose underlying set-valued functor is representable.
\end{enumerate}
\end{theorem}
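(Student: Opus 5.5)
The plan mirrors the proof of Theorem \ref{affineRBgroup THM}, with Proposition \ref{pro:equivalent affine groups} used twice, once for each group structure. For (1)$\Leftrightarrow$(2), I first observe that by Proposition \ref{pro:equivalent affine groups}, requiring $(\mathcal{A}(R),\bullet_R)$ and $(\mathcal{A}(R),\circ_R)$ to be groups for every $R$, with $m_\bullet,m_\circ$ natural, is the same as requiring $(\mathcal{A},m_\bullet,e_\bullet,\mathrm{inv}_\bullet)$ and $(\mathcal{A},m_\circ,e_\circ,\mathrm{inv}_\circ)$ to be group objects in $\mathsf{RepFunc}[\mathsf{Alg}_k,\mathsf{Set}]$. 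It then remains to match the compatibility condition.

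Products in $\mathsf{RepFunc}[\mathsf{Alg}_k,\mathsf{Set}]$ are computed objectwise: $h^{A}\times h^{A'}\cong h^{A\otimes A'}$, and the projections and pairings $\langle\ ,\ \rangle$ are evaluated pointwise. Hence two natural transformations $\mathcal{A}^{\times 3}\to\mathcal{A}$ are equal if and only if their components at each $R$ agree. The component at $R$ of Diagram \eqref{skewbracediagram} is exactly the set-level diagram
\[
a\circ_R(b\bullet_R c)=(a\circ_R b)\bullet_R a^{-1}\bullet_R(a\circ_R c).
\]
So Diagram \eqref{skewbracediagram} commutes if and only if each $(\mathcal{A}(R),\bullet_R,\circ_R)$ satisfies the skew brace axiom. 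In the brace case, the swap diagram is handled the same way: it commutes if and only if every $(\mathcal{A}(R),\bullet_R)$ is abelian.

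For (1)$\Rightarrow$(3), I use naturality of $m_\bullet$ and $m_\circ$. It says that for each $\varphi:R\to R'$ the map $\mathcal{A}(\varphi)$ preserves both operations, so it is a homomorphism of (skew) braces. Functoriality is inherited from the set-valued functor, which gives a functor into $\mathsf{SBr}$ (resp. $\mathsf{Br}$). For (3)$\Rightarrow$(1), I read off $m_{\bullet,R}$ and $m_{\circ,R}$ from the brace structures. The fact that each $\mathcal{A}(\varphi)$ is a brace homomorphism is precisely the naturality square for $m_\bullet$ and for $m_\circ$. Representability is assumed, so $\mathcal{A}$ is an affine (skew) brace.

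I expect no real obstacle. The only point needing care is that the derived morphisms $e_\bullet,\mathrm{inv}_\bullet$ (and their $\circ$ analogues) are automatically natural and agree componentwise with the set-level units and inverses. Once this is known, $\mathrm{inv}_\bullet\circ\pi_1$ in Diagram \eqref{skewbracediagram} evaluates to $a\mapsto a^{-1}$. I get this from Proposition \ref{pro:equivalent affine groups}, since a group homomorphism $\mathcal{A}(\varphi)$ preserves units and inverses.
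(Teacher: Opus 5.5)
Your proposal is correct and follows essentially the same route as the paper. For (1)$\Leftrightarrow$(2) you apply Proposition \ref{pro:equivalent affine groups} to each group structure and check Diagram \eqref{skewbracediagram} componentwise; for (1)$\Leftrightarrow$(3) you identify naturality of $m_\bullet, m_\circ$ with each $\mathcal{A}(\varphi)$ being a (skew) brace homomorphism. You also make explicit two points the paper leaves implicit: products in $\mathsf{RepFunc}[\mathsf{Alg}_k,\mathsf{Set}]$ are computed objectwise, and $e_\bullet, \mathrm{inv}_\bullet$ evaluate to the set-level unit and inverse.
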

\begin{proof}
In what follows, we prove the statement for skew braces; the brace version follows by a similar argument. 

First we show the equivalence between (1) and (2). By Proposition~\ref{pro:equivalent affine groups}, the affine group $(\mathcal{A},m_\bullet)$ and  $(\mathcal{A},m_\circ)$ are equivalent to group objects $(\mathcal{A},m_\bullet,e_\bullet,\operatorname{inv}_\bullet)$ and $(\mathcal{A},m_\circ,e_\circ,\operatorname{inv}_\circ)$ in $\mathsf{RepFunc}[\mathsf{Alg}_k, \mathsf{Set}]$. For $(\mathcal{A},m_\bullet,m_\circ)$ to be an affine skew brace, the following diagram must commute for every $R\in\mathsf{Alg}_k$:
     \[
           \begin{tikzcd}[column sep=5cm, row sep=huge]
            \mathcal{A}(R)^{\times 3} 
            \ar[r,"{\big\langle {m_\circ}_R \circ \langle \pi_1,\pi_2 \rangle,\;(\mathrm{inv}_\bullet)_R \circ \pi_1,\; {m_\circ}_R \circ \langle \pi_1,\pi_3 \rangle \big \rangle}"]
            \ar[d,"\mathrm{id} \times {m_\bullet}_R"]
            & \mathcal{A}(R)^{\times 3} \ar[d,"{m_\bullet}_R^{(2)}"] \\
            \mathcal{A}(R)^{\times 2} \ar[r,"{m_\circ}_R"] & \mathcal{A}(R) ,
           \end{tikzcd}
        \]
    where ${m_\bullet}_R^{(2)}$ is given by \eqref{phi(n):A(n+1) to A}.
    And this is equivalent to $(\mathcal{A},m_\bullet,e_\bullet,\operatorname{inv}_\bullet)$ and $(\mathcal{A},m_\circ,e_\circ,\operatorname{inv}_\circ)$ are group objects in $\mathsf{RepFunc}[\mathsf{Alg}_k, \mathsf{Set}]$ and satisfy Diagram \eqref{skewbracediagram}, i.e., $(\mathcal{A},m_\bullet,m_\circ)$ is a skew brace object in $\mathsf{RepFunc}[\mathsf{Alg}_k, \mathsf{Set}]$. 

Next we show (1) is equivalent to (3). Let $(\mathcal{A},m_\bullet,m_\circ)$ be an affine skew brace. The naturality of $m_\bullet$ and $m_\circ$ ensures that for any algebra homomorphism $\varphi: R \to R'$, the induced map $\mathcal{A}(\varphi): \mathcal{A}(R) \to \mathcal{A}(R')$ preserves the multiplications $m_{\bullet R}$ and $m_{\circ R}$; consequently $\mathcal{A}(\varphi)$ is a homomorphism of skew braces. This defines a functor $\mathsf{Alg}_k \to \mathsf{SBr}$. 

Conversely, note that $\mathcal{A}$ is representable by assumption. And for any $R\in \mathsf{Alg}_k$, there is a skew brace structure over $\mathcal{A}(R)$, denoted by $\big( \mathcal{A}(R), \cdot, \circ \big)$. Let ${m_\bullet}_R(f,g):=f \cdot g$ and ${m_\circ}_R:=f \circ g$. It remains to show that ${m_\bullet}_R$ and ${m_\circ}_R$ are natural in $R$. For any algebra homomorphism $\varphi: R \to S$, $\mathcal{A}(\varphi)$ is a skew brace homomorphism from $\mathcal{A}(R)$ to $\mathcal{A}(S)$. Equivalently, the following diagrams commute:
\[
\begin{tikzcd}
\mathcal{A}(R) \times \mathcal{A}(R) \arrow[r, "{m_\bullet}_R"] \arrow[d, "\mathcal{A}(\varphi) \times \mathcal{A}(\varphi)"'] & \mathcal{A}(R) \arrow[d, "\mathcal{A}(\varphi)"] \\
\mathcal{A}(S) \times \mathcal{A}(S) \arrow[r, "{m_\bullet}_S"'] & \mathcal{A}(S)
\end{tikzcd}
\qquad
\begin{tikzcd}
\mathcal{A}(R) \times \mathcal{A}(R) \arrow[r, "{m_\circ}_R"] \arrow[d, "\mathcal{A}(\varphi) \times \mathcal{A}(\varphi)"'] & \mathcal{A}(R) \arrow[d, "\mathcal{A}(\varphi)"] \\
\mathcal{A}(S) \times \mathcal{A}(S) \arrow[r, "{m_\circ}_S"'] & \mathcal{A}(S) .
\end{tikzcd}
\]
This implies that $m_\bullet$ and $m_\circ$ are natural transformations from $\mathcal{A} \times \mathcal{A}$ to $\mathcal{A}$. Thus, the proof is complete.
\end{proof}

In the language of skew brace objects, to give a complete description of the structure of an affine skew brace \((\mathcal{A}, m_\bullet, m_\circ)\), we may use the septuple
\[
(\mathcal{A}, m_\bullet, e_\bullet, \operatorname{inv}_\bullet, m_\circ, e_\circ, \operatorname{inv}_\circ),
\]
where \((\mathcal{A}, m_\bullet, e_\bullet, \operatorname{inv}_\bullet)\) and \((\mathcal{A}, m_\circ, e_\circ, \operatorname{inv}_\circ)\) are affine groups. However we will show that \(e_\bullet\) and \(e_\circ\) coincide.

\begin{proposition}
\label{affine skewbrace common unit}
    For any affine skew brace $(\mathcal{A}, m_\bullet, e_\bullet, \operatorname{inv}_\bullet, m_\circ, e_\circ, \operatorname{inv}_\circ)$, the unit morphisms coincide.
\end{proposition}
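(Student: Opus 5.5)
The plan is to reduce the statement to the set-theoretic Lemma~\ref{skew brace lemma} objectwise, and then use the Yoneda lemma to identify the resulting equality of elements with an equality of natural transformations. This mirrors the proof of Proposition~\ref{Be=e affine RBGrp}. Throughout, the unit morphisms are $e_\bullet, e_\circ : * \to \mathcal{A}$ in $\mathsf{RepFunc}[\mathsf{Alg}_k,\mathsf{Set}]$. Here $*$ is the terminal functor, represented by $k$, so $*(R)$ is a singleton $\{\eta_R\}$. Hence, for each $R$, the components $(e_\bullet)_R$ and $(e_\circ)_R$ are simply elements of $\mathcal{A}(R)$.

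First, I will check that for each $R \in \mathsf{Alg}_k$, $(e_\bullet)_R(\eta_R)$ is the identity element of the group $(\mathcal{A}(R), \bullet_R)$, and likewise $(e_\circ)_R(\eta_R)$ is the identity of $(\mathcal{A}(R), \circ_R)$. This follows directly from the unit axiom of a group object evaluated at $R$, as in Proposition~\ref{pro:equivalent affine groups} and Remark~\ref{Hom on affine group}. By Theorem~\ref{affine (skew) brace equivalent thm}, $(\mathcal{A}(R), \bullet_R, \circ_R)$ is a skew brace. Lemma~\ref{skew brace lemma} then gives $(e_\bullet)_R(\eta_R) = (e_\circ)_R(\eta_R)$.

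Second, since both components agree for every $R$ and $*(R)$ is a singleton, the natural transformations $e_\bullet$ and $e_\circ$ coincide componentwise, and hence are equal. Equivalently, under the Yoneda correspondence with $\mathcal{A} \cong h^{\mathcal{O}(\mathcal{A})}$, both units correspond to algebra maps $\mathcal{O}(\mathcal{A}) \to k$. The two counits $\varepsilon_\bullet$ and $\varepsilon_\circ$ are determined by the $R = k$ component, and they agree by the previous step.

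The only point needing care is the identification of the unit natural transformation with the identity element in each $\mathcal{A}(R)$. In particular, one must check that the unit of the group object really evaluates to the group identity. This is exactly the content of the correspondence in Remark~\ref{Hom on affine group}(2), so I expect no real obstacle: the whole argument is a pointwise application of Lemma~\ref{skew brace lemma}.
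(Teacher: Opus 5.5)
Your proposal is correct and takes the same approach as the paper. Both apply Lemma~\ref{skew brace lemma} to each skew brace $(\mathcal{A}(R),\bullet_R,\circ_R)$ to get $(e_\bullet)_R(\eta_R)=(e_\circ)_R(\eta_R)$, then conclude $e_\bullet=e_\circ$ because $h^k(R)=\{\eta_R\}$ is a singleton; your Yoneda remark about the counits agreeing at $R=k$ is a correct but optional reformulation.
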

\begin{proof}
    By Lemma \ref{skew brace lemma}, for every $k$-algebra $R$ groups $\big(\mathcal{A}(R), \cdot \big)$ and $\big(\mathcal{A}(R), \circ \big)$ have the same identity element. So the image of ${e_\bullet}_R: h^k(R) \to \mathcal{A}(R)$ is equal to the image of ${e_\circ}_R: h^k(R) \to \mathcal{A}(R)$, i.e., ${e_\bullet}_R(\eta_R)={e_\circ}_R(\eta_R)$. Thus \(e_\bullet = e_\circ\).
\end{proof}

\subsection{Affine (skew) braces and commutative Hopf co-braces}

\begin{definition}{\rm (\cite{AGV17})}
\label{def Hopf cobrace}
Let \((A, \mu, \eta)\) be a $k$-algebra, which is not necessary commutative. A \textbf{Hopf co-brace structure} over \(A\) consists of the following data:
\begin{enumerate}
    \item[\rm(1)] a Hopf algebra structure \((A, \mu, \eta, \Delta_\bullet, \epsilon_\bullet, S_\bullet)\) and
    \item[\rm(2)] a Hopf algebra structure \((A, \mu, \eta, \Delta_\circ, \epsilon_\circ, S_\circ)\)
\end{enumerate}
satisfying the following compatibility:
\begin{equation}
\label{hopfcobrace}
a_{1_\circ} \otimes (a_{2_\circ})_{1_\bullet} \otimes (a_{2_\circ})_{2_\bullet} = (a_{1_\bullet})_{1_\circ} S(a_{2_\bullet})(a_{3_\bullet})_{1_\circ} \otimes (a_{1_\bullet})_{2_\circ} \otimes (a_{3_\bullet})_{2_\circ}
\end{equation}
for all \(a \in A\). Here we use Sweedler's notation: $\Delta_\circ(a) = a_{1_\circ} \otimes a_{2_\circ}$, 
$\Delta_\bullet(a) = a_{1_\bullet} \otimes a_{2_\bullet}$, and $\Delta_\bullet^{(2)}(a) = a_{1_\bullet} \otimes a_{2_\bullet} \otimes a_{3_\bullet}$.
And by a triple \((A, \Delta_\bullet, \Delta_\circ)\) we mean a Hopf co-brace.
\end{definition}

A Hopf co-brace \((A, \Delta_\bullet, \Delta_\circ)\) is said to be \textbf{commutative} if \(A\) is a commutative algebra. 

Recall that a \textbf{homomorphism of Hopf co-braces} $\psi: (H,\Delta_\bullet, \Delta_\circ) \to (G,\Delta_\bullet, \Delta_\circ)$ is a linear map such that $\psi: (H,\Delta_\bullet) \to (G,\Delta_\bullet)$ and $\psi: (H,\Delta_\circ) \to (G,\Delta_\circ)$ are Hopf algebra homomorphisms.

We denote by \(\mathsf{HopfCoBr}_k\) the category of commutative Hopf co-braces.

Proposition 5.6 of \cite{AGV17} can be reformulated in our terminology as follows: a commutative Hopf co-brace corresponds to an affine skew brace. Moreover, building on their result, we further obtain that an affine brace corresponds to a commutative Hopf co-brace whose first comultiplication is cocommutative.

\begin{theorem}{\rm(\cite{AGV17})}
\label{affine skewbrace Hopf co-brace}
If \(A\) is a commutative Hopf co-brace then \(h^A\) is an affine skew brace. Conversely, if \(\mathcal{A}\) is an affine skew brace, then \(\mathcal{O}(\mathcal{A})\) is a commutative Hopf co-brace. 

Moreover,  this correspondence is functorial and yields a contravariant equivalence of categories:
\[
\mathsf{AffSBr}_k \simeq \mathsf{HopfcoBr}_k^{\mathrm{opp}}.
\]
\end{theorem}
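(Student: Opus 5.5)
The plan mirrors the proofs of Theorem~\ref{affineRBG RBHopf THM} and Theorem~\ref{coRBHopf = AffRBgroup category equivalence}. First we use Theorem~\ref{affine (skew) brace equivalent thm}: an affine skew brace is the same as a skew brace object $(\mathcal{A},m_\bullet,m_\circ)$ in $\mathsf{RepFunc}[\mathsf{Alg}_k,\mathsf{Set}]$. Write $\mathcal{A}\cong h^{A}$. By Proposition~\ref{pro:equivalent affine groups} and Proposition~\ref{Prop affinegroup equivalent  to Hopf}, each of the two affine group structures gives a commutative Hopf algebra structure on the same algebra $A$. Concretely, $m_\bullet,e_\bullet,\operatorname{inv}_\bullet$ correspond to $\Delta_\bullet,\epsilon_\bullet,S_\bullet$, and $m_\circ,e_\circ,\operatorname{inv}_\circ$ correspond to $\Delta_\circ,\epsilon_\circ,S_\circ$. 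By Proposition~\ref{affine skewbrace common unit} the counits agree, although this is not needed for the definition. What remains is to show that the compatibility diagram \eqref{skewbracediagram} is equivalent, under Yoneda, to the identity \eqref{hopfcobrace}.

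For this we dualize \eqref{skewbracediagram} using $h^A\times h^A\times h^A\cong h^{A^{\otimes 3}}$.
\begin{itemize}
\item[$\bullet$] The left path $m_\circ\circ(\mathrm{id}\times m_\bullet)$ corresponds to the algebra map $(\mathrm{id}\otimes\Delta_\bullet)\circ\Delta_\circ : A\to A^{\otimes3}$. Evaluated on $a$, this gives $a_{1_\circ}\otimes (a_{2_\circ})_{1_\bullet}\otimes(a_{2_\circ})_{2_\bullet}$.
\item[$\bullet$] The right path is $m^{(2)}_\bullet\circ\langle m_\circ\langle\pi_1,\pi_2\rangle,\operatorname{inv}_\bullet\pi_1,m_\circ\langle\pi_1,\pi_3\rangle\rangle$. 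Here $m_\bullet^{(2)}$ dualizes to $\Delta_\bullet^{(2)}$. The three components dualize to $(\iota_1\otimes\iota_2)\Delta_\circ$, $\iota_1 S_\bullet$ and $(\iota_1\otimes\iota_3)\Delta_\circ$. The pairing $\langle-,-,-\rangle$ into a product dualizes to the multiplication $\mu_{A^{\otimes3}}$ applied after the tensor of these maps.
\end{itemize}
On $a$, the right path therefore gives $(a_{1_\bullet})_{1_\circ}S_\bullet(a_{2_\bullet})(a_{3_\bullet})_{1_\circ}\otimes(a_{1_\bullet})_{2_\circ}\otimes(a_{3_\bullet})_{2_\circ}$, which is exactly the right side of \eqref{hopfcobrace}. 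Since $A\rightsquigarrow h^A$ is fully faithful, the diagram commutes if and only if \eqref{hopfcobrace} holds. This proves both directions of the first statement: $\mathcal{O}(\mathcal{A})$ is a commutative Hopf co-brace, and for a commutative Hopf co-brace $A$, the functor $h^A$ with $h^{\Delta_\bullet},h^{\Delta_\circ}$ is an affine skew brace.

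The main obstacle is this dualization of the pairing morphism. One must check carefully that the ordering of tensor factors and the placement of $S_\bullet$ produce exactly \eqref{hopfcobrace}. I would verify it pointwise: for $f,g,h\in\mathrm{Hom}(A,R)$, set $F=\mu_R^{(2)}(f\otimes g\otimes h)$. Then
\[
(f\circ g)\cdot f^{-1}\cdot(f\circ h)= \mu_R^{(2)}\bigl((\mu_R(f\otimes g)\Delta_\circ)\otimes (fS_\bullet)\otimes(\mu_R(f\otimes h)\Delta_\circ)\bigr)\Delta_\bullet^{(2)},
\]
and this equals $F$ composed with the right-hand element map. The left side is handled the same way, and Yoneda is applied with $R=A^{\otimes3}$ and the universal point.

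For the equivalence of categories we argue as in Theorem~\ref{coRBHopf = AffRBgroup category equivalence}. A homomorphism $\Phi$ of affine skew braces is a natural transformation compatible with both $m_\bullet$ and $m_\circ$. By Proposition~\ref{Prop affinegroup equivalent  to Hopf} it corresponds to a single algebra map $\psi:\mathcal{O}(\mathcal{A}')\to\mathcal{O}(\mathcal{A})$ that is a Hopf map for both $\Delta_\bullet$ and $\Delta_\circ$, i.e. a Hopf co-brace homomorphism, and conversely. Functoriality and the natural isomorphisms $h^{\mathcal{O}(-)}\cong\mathrm{id}$ and $\mathcal{O}(h^{-})\cong\mathrm{id}$ then follow from the Yoneda lemma.
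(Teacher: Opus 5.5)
Your proof is correct. The paper gives no proof of this statement; it is quoted from Angiono--Galindo--Vendramin.

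Your argument follows the Yoneda-dualization template the paper uses for the co-Rota--Baxter analogues, Theorems \ref{affineRBG RBHopf THM} and \ref{coRBHopf = AffRBgroup category equivalence}. Your pointwise check with $R=A^{\otimes 3}$ at the universal point $(\iota_1,\iota_2,\iota_3)$ makes explicit a step the paper leaves implicit there. It correctly yields the tensor ordering and the antipode $S_\bullet$ of the first structure in \eqref{hopfcobrace}. One notational point: read your $(\iota_1\otimes\iota_2)\Delta_\circ$ as $[\iota_1,\iota_2]\circ\Delta_\circ$, i.e.\ $x\otimes y\mapsto\iota_1(x)\iota_2(y)$, since the paper's notation $f_1\otimes f_2$ denotes a different map.
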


\begin{corollary}
    Affine braces \((\mathcal{A}, m_\bullet, m_\circ)\) correspond to commutative Hopf co-braces $(A, \Delta_\bullet, \Delta_\circ)$ whose first comultiplication $\Delta_\bullet$ is cocommutative. 
\end{corollary}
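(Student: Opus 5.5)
The plan is to restrict the anti-equivalence of Theorem \ref{affine skewbrace Hopf co-brace} to full subcategories on each side and to check that cocommutativity of $\Delta_\bullet$ corresponds exactly to commutativity of the affine group $(\mathcal{A}, m_\bullet)$. Concretely, I will show that for a commutative Hopf co-brace $(A,\Delta_\bullet,\Delta_\circ)$, the affine skew brace $h^A$ is an affine brace if and only if $\Delta_\bullet$ is cocommutative. Conversely, for an affine skew brace $\mathcal{A}$, I will show that $\mathcal{O}(\mathcal{A})$ has cocommutative first comultiplication if and only if $(\mathcal{A}(R),\bullet_R)$ is abelian for every $R$. Since both subcategories are full, the anti-equivalence then restricts to $\mathsf{AffBr}_k \simeq$ (commutative Hopf co-braces with cocommutative $\Delta_\bullet$)$^{\mathrm{opp}}$.

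The key step is the Yoneda translation of the abelian-group-object diagram. By Theorem \ref{affine (skew) brace equivalent thm}, $\mathcal{A}$ is an affine brace exactly when $(\mathcal{A},m_\bullet)$ is an abelian group object in $\mathsf{RepFunc}[\mathsf{Alg}_k,\mathsf{Set}]$, i.e.\ $m_\bullet\circ\tau = m_\bullet$ as natural transformations $\mathcal{A}\times\mathcal{A}\to\mathcal{A}$. Under $\mathcal{A}\cong h^{A}$ and $\mathcal{A}\times\mathcal{A}\cong h^{A\otimes A}$, the transformation $m_\bullet$ corresponds to $\Delta_\bullet$ and the swap $\tau=\langle\pi_2,\pi_1\rangle$ corresponds to the flip $\sigma=[\iota_2,\iota_1]\colon A\otimes A\to A\otimes A$, $a\otimes b\mapsto b\otimes a$. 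The contravariant Yoneda functor reverses composition, so $m_\bullet\circ\tau$ corresponds to $\sigma\circ\Delta_\bullet$. By full faithfulness, $m_\bullet\circ\tau=m_\bullet$ holds if and only if $\sigma\circ\Delta_\bullet=\Delta_\bullet$, which is precisely cocommutativity.

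As a concrete check, one can also argue elementwise. For $f,g\in\mathrm{Hom}(A,R)$ we have $f\bullet g=\mu_R\circ(f\otimes g)\circ\Delta_\bullet$ and $g\bullet f=\mu_R\circ(f\otimes g)\circ\sigma\circ\Delta_\bullet$, using that $R$ is commutative. Taking $R=A\otimes A$ and $f=\iota_1$, $g=\iota_2$ recovers $\Delta_\bullet$ and $\sigma\Delta_\bullet$ themselves. Hence commutativity of every $\mathcal{A}(R)$ forces cocommutativity, and the converse direction is immediate from the displayed formulas.

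The main obstacle is the bookkeeping of the identification of $\tau$ with $\sigma$ under $h^{A}\times h^{A}\cong h^{A\otimes A}$, together with the orientation of the composites. This is exactly where the universal test object $R=A\otimes A$ with $(\iota_1,\iota_2)$ settles matters. Everything else, namely functoriality and the fullness of the restricted subcategories, is inherited directly from Theorem \ref{affine skewbrace Hopf co-brace}.
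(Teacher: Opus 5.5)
Your proposal is correct and takes the same route as the paper, whose proof is a one-line appeal to the Yoneda duality between commutativity of $m_\bullet$ and cocommutativity of $\Delta_\bullet$. Your identification of the swap $\tau$ with the flip $a\otimes b\mapsto b\otimes a$, the universal test on $R=A\otimes A$ with $(\iota_1,\iota_2)$, and the remark that the anti-equivalence restricts to the two full subcategories simply spell out what the paper leaves implicit.
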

\begin{proof}
    It follows from the duality between the commutativity of the multiplication $m_\bullet$ of an affine brace and the cocommutativity of the comultiplication $\Delta_\bullet$ of a Hopf co-brace. 
\end{proof}

\section{Affine Rota--Baxter groups, affine skew braces and their coordinate rings}\label{sec:correspond}
In this section, we investigate the connections between affine RB groups and affine skew braces. We show that every affine RB group induces an affine skew brace, and conversely, every affine skew brace admits an embedding into an affine RB group. Finally, we establish an equivalence between co-RB Hopf algebras and Hopf co-braces
\subsection{Affine Rota--Baxter groups and affine skew braces}
Inspired by the classical correspondence between RB groups and skew braces, we establish an analogous correspondence between affine RB groups and affine skew braces.

\begin{proposition}
\label{affine RB to affine skew brace prop}
    Let \((G, m, e, \operatorname{inv}, B)\) be an affine RB group and \((G, m^B, e, \operatorname{inv}^B)\) its descendant affine group. Then \((G, m, m^B)\) is an affine skew brace.
\end{proposition}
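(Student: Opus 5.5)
The plan is to verify the definition of an affine skew brace pointwise: for each $k$-algebra $R$ we need $(G(R), m_R, m^B_R)$ to be a skew brace, with $m$ and $m^B$ natural transformations. Then either Theorem~\ref{affine (skew) brace equivalent thm} or the definition itself finishes the argument. The representability of $G$ is part of the hypothesis, since $G$ underlies an affine RB group.

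First, I would check naturality. By Definition~\ref{def affine RB group}, $m$ is natural. By Proposition~\ref{descend affine RBG}(i), $(G,m^B,e,\operatorname{inv}^B)$ is an affine group, so $m^B$ is also natural. This holds because $m^B$ is defined in Diagram~\eqref{m^B diagram} as a composite of natural transformations built from $m$, $B$, $\operatorname{inv}$ and the projections.

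Second, fix $R$. By definition $(G(R),m_R,B_R)$ is an RB group. Proposition~\ref{descend affine RBG}(i) shows that $m^B_R(x,y)=xB_R(x)yB_R(x)^{-1}$, which is exactly the operation $\circ_{B_R}$ of Proposition~\ref{RBgroup to skew brace  proposition}. Applying that proposition, $(G(R),m_R,m^B_R)$ is a skew brace. If one wants to see the compatibility identity directly, it is the one-line computation
\[
a\circ(bc)=aB(a)bB(a)^{-1}\,a^{-1}\,aB(a)cB(a)^{-1}=(a\circ b)\,a^{-1}\,(a\circ c).
\]
The group axioms for $\circ$ come from Proposition~\ref{G oB group}.

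Combining the two steps, $(G,m,m^B)$ is an affine skew brace. Equivalently, by Theorem~\ref{affine (skew) brace equivalent thm}, it is a skew brace object in $\mathsf{RepFunc}[\mathsf{Alg}_k,\mathsf{Set}]$. I do not expect a genuine obstacle. The only point needing care is confirming that the formula for $m^B_R$ extracted from Diagram~\eqref{m^B diagram} matches the operation $\circ_B$ of Proposition~\ref{RBgroup to skew brace  proposition}, including the order of factors in $m^{(3)}$. As a consistency check, the common unit $e$ is compatible with Proposition~\ref{affine skewbrace common unit}.
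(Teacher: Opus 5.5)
Your proposal is correct and follows the paper's proof. Both identify $m^B_R(x,y)=xB_R(x)yB_R(x)^{-1}$ with $\circ_{B_R}$ and apply Proposition~\ref{RBgroup to skew brace  proposition} for each $R$; your separate naturality check via Proposition~\ref{descend affine RBG} and your direct verification of the compatibility identity only add detail the paper leaves implicit.
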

\begin{proof}
For every \(k\)-algebra \(R\), the pairs \(\big(G(R), m_R \big)\) and \( \big(G(R), m_R^B \big)\) are groups, where \(m_R(x, y) = x y\) and $m_R^B(x, y)= x B_R(x) y B(x)^{-1}.$ Then \( \big(G(R), m_R, m^B_R \big)\) is a skew brace by Proposition \ref{RBgroup to skew brace  proposition}. Hence \(\big(G, m, m^B\big)\) is an affine skew brace. \qedhere
\end{proof}

Let $(\mathcal{A}, m_\bullet, \operatorname{inv}_\bullet, e, m_\circ, \operatorname{inv}_\circ)$ be an affine skew brace.  For each $R=(R,\mu_R,\eta_R)$, we denote the multiplications by
\[
x \cdot_R y=m_{\bullet R}(x,y)  ,\quad x \circ_R y =m_{\circ R}(x,y) .
\]
Let $\operatorname{inv}_{\bullet R}(x)=x^{-1}$ denote the inverse of $x$ in the group $\bigl(\mathcal{A}(R), \cdot_R\bigr)$, and let $\operatorname{inv}_{\circ R}(x)=x^{\circ-1}$ denote the inverse in $\bigl(\mathcal{A}(R), \circ_R\bigr)$.
The two group structures share a common identity element, which we denote by $1_R = e_R(\eta_R)$.

\begin{lemma}
\label{Lem affine skeW brace give affine RB group}
Let \((\mathcal{A}, m_\bullet, m_\circ)\) be an affine skew brace over \(k\). 
Define a functor \(\tilde{\mathcal{A}}: \mathsf{Alg}_k \to \mathsf{Set}\) as follows:
\begin{itemize}
    \item For each \(R \in \mathsf{Alg}_k\), $\tilde{\mathcal{A}}(R) = \big(\mathcal{A}(R), {m_\circ}_R \big) \ltimes \big(\mathcal{A}(R), {m_\bullet}_R \big)$.
    \item For each algebra homomorphism \(\varphi: R \to S\), $\tilde{\mathcal{A}}(\varphi)=\mathcal{A}(\varphi) \times \mathcal{A}(\varphi)$.
\end{itemize}
Then $\tilde{\mathcal{A}}$ is an affine group and denoted by  \((\tilde{\mathcal{A}}, m_*, e_*, \operatorname{inv}_*)\).
Define  a binary operation \(B: \tilde{\mathcal{A}} \times\tilde{\mathcal{A}} \to \tilde{\mathcal{A}}\) by
\[
B_R(x, y) = \big( x^{\circ(-1)} \circ_R y,\; 1_R \big).
\]
Then \((\tilde{\mathcal{A}}, m_*, e_*, \operatorname{inv}_*, B)\) is an affine RB group.
\end{lemma}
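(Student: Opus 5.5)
The plan is to check two things separately. First, $\tilde{\mathcal{A}}$ is an affine group. Second, $B$ (really a unary natural transformation $\tilde{\mathcal{A}}\to\tilde{\mathcal{A}}$, $(x,y)\mapsto (x^{\circ(-1)}\circ_R y, 1_R)$) is natural and is an RB-operator on each $\tilde{\mathcal{A}}(R)$.

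For the first point, note that as a set-valued functor $\tilde{\mathcal{A}}=\mathcal{A}\times\mathcal{A}$. This is represented by $\mathcal{O}(\mathcal{A})\otimes\mathcal{O}(\mathcal{A})$, since $h^{A}\times h^{A}\cong h^{A\otimes A}$. The multiplication is
\[
(a,b)*_R(c,d)=\big(a\circ_R c,\; b\cdot_R\lambda_a(d)\big),\qquad \lambda_a(d)=a^{-1}\cdot_R(a\circ_R d).
\]
We can write it as a composite of natural transformations built from $m_\circ$, $m_\bullet$, $\operatorname{inv}_\bullet$ and projections, so $m_*$ is natural. For each $R$, Proposition \ref{lamda from skewbrace remark} shows that $\lambda\colon(\mathcal{A}(R),\circ_R)\to\operatorname{Aut}(\mathcal{A}(R),\cdot_R)$ is a homomorphism. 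Hence $\tilde{\mathcal{A}}(R)$ is the usual semidirect product group, with unit $(1_R,1_R)$ and inverse $(a,b)^{*(-1)}=(a^{\circ(-1)},\lambda_a^{-1}(b^{-1}))$. By Proposition \ref{pro:equivalent affine groups}, $\tilde{\mathcal{A}}$ is an affine group. Alternatively, one can check directly that $\tilde{\mathcal{A}}(\varphi)=\mathcal{A}(\varphi)\times\mathcal{A}(\varphi)$ is a group homomorphism, because $\mathcal{A}(\varphi)$ preserves both $\cdot$ and $\circ$ (Theorem \ref{affine (skew) brace equivalent thm}(3)), and then invoke form (3) of that proposition.

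For the RB property, fix $R$. The subgroups $H(R)=\{(g,g)\}$ and $L(R)=\{(g,1_R)\}$ give an exact factorization of $\tilde{\mathcal{A}}(R)$, via
\[
(a,b)=(b,b)*(b^{\circ(-1)}\circ a,1_R).
\]
Checking this uses $\lambda_b(1_R)=1_R$, and $H(R)\cap L(R)$ is trivial. By Example \ref{prelim examples}(3), the map $h*\ell\mapsto\ell^{*(-1)}=(a^{\circ(-1)}\circ b,1_R)$ is an RB-operator. This is exactly the discussion preceding Theorem \ref{embed skew brace into RBG Thm}, so pointwise it agrees with the stated $B_R$. Alternatively, I would verify directly that $H$ and $L$ are affine subgroups (they are subfunctors closed under the operations) and apply the proposition on exact decompositions proved in Section \ref{sec:affine}, which already gives naturality.

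It remains to check that $B$ is natural; this is the only real point to check. For $\varphi\colon R\to S$,
\[
\tilde{\mathcal{A}}(\varphi)\big(B_R(x,y)\big)=\big(\mathcal{A}(\varphi)(x)^{\circ(-1)}\circ_S\mathcal{A}(\varphi)(y),\,1_S\big)=B_S\big(\tilde{\mathcal{A}}(\varphi)(x,y)\big).
\]
This holds because $\mathcal{A}(\varphi)$ is a $\circ$-group homomorphism, so it preserves $\circ$-inverses and units; also $\mathcal{A}(\varphi)(1_R)=1_S$ by Proposition \ref{affine skewbrace common unit}. Equivalently, $B=\langle m_\circ\circ(\operatorname{inv}_\circ\times\operatorname{id}),\,e\circ !\rangle$ is a composite of natural transformations. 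Combining these facts with Definition \ref{def affine RB group} gives the claim. I expect no serious obstacle. The main care is to confirm that the semidirect product multiplication and the inverse are natural, i.e. expressible through the structure maps, and that the convention for $B$ (as $\ell^{*(-1)}$) matches the factorization order.
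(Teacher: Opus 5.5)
Your proposal is correct and follows essentially the same route as the paper. It proves representability of $\tilde{\mathcal{A}}$ by $\mathcal{O}(\mathcal{A})\otimes\mathcal{O}(\mathcal{A})$, builds the semidirect product via $\lambda$ with $\tilde{\mathcal{A}}(\varphi)$ a group homomorphism, uses the exact factorization $(a,b)=(b,b)*(b^{\circ(-1)}\circ a,1_R)$ together with Example~\ref{prelim examples}(3), and checks naturality of $B$ directly. Your observations that $B$ is really a unary transformation, and that naturality could alternatively come from the exact-decomposition proposition of Section~\ref{sec:affine}, are accurate refinements rather than a different strategy.
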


\begin{proof}
First we show that \(\tilde{\mathcal{A}}\) is an affine group. It is easy to check that \(\tilde{\mathcal{A}}\) is a functor, and \(\tilde{\mathcal{A}}\) is represented by \(\mathcal{O}(\mathcal{A}) \otimes \mathcal{O}(\mathcal{A})\): for every \(R \in \mathsf{Alg}_k\),
\[
\tilde{\mathcal{A}}(R) = \mathcal{A}(R) \times \mathcal{A}(R) \cong \operatorname{Hom}(\mathcal{O}(\mathcal{A}), R) \times \operatorname{Hom}(\mathcal{O}(\mathcal{A}), R) \cong \operatorname{Hom}(\mathcal{O}(\mathcal{A}) \otimes \mathcal{O}(\mathcal{A}), R).
\]
Hence \(\tilde{\mathcal{A}}\) is a representable functor from \(\mathsf{Alg}_k\) to \(\mathsf{Set}\). By Remark~\ref{lamda from skewbrace remark}, each \(\lambda_x\) is an automorphism of \(\bigl(\mathcal{A}(R), \cdot_R\bigr)\), and
\[
\lambda: \bigl(\mathcal{A}(R), \circ_R\bigr) \longrightarrow \operatorname{Aut}\bigl(\mathcal{A}(R), \cdot_R\bigr)
\]
is a group homomorphism.  This allows us to define a group structure on each \(\tilde{\mathcal{A}}(R)\) via the semidirect product constructed using \(\lambda\):
\[
(x,y) *_R (z,t) = \bigl( x \circ_R z,\; y \cdot_R \lambda_x(t) \bigr), \qquad \forall (x,y), (z,t) \in \tilde{\mathcal{A}}(R).
\]
Furthermore, for every homomorphism \(\varphi : R \to S\), since \(\mathcal{A}(\varphi)\) is a skew brace homomorphism, we have 
\[
\begin{aligned}
	\tilde{\mathcal{A}}(\varphi)\bigl((x,y) *_R (z,t)\bigr)
	&= \Bigl( \mathcal{A}(\varphi)(x \circ_R z),\; \mathcal{A}(\varphi)\bigl(y \cdot_R \lambda_x(t)\bigr) \Bigr) \\
	&= \Bigl( \mathcal{A}(\varphi)(x) \circ_S \mathcal{A}(\varphi)(z),\; 
	\mathcal{A}(\varphi)(y) \cdot_S \lambda_{\mathcal{A}(\varphi)(x)}\bigl(\mathcal{A}(\varphi)(t)\bigr) \Bigr) \\
	&= \tilde{\mathcal{A}}(\varphi)(x,y) \;*_S\; \tilde{\mathcal{A}}(\varphi)(z,t),
\end{aligned}
\]
which implies that  the map \(\tilde{\mathcal{A}}(\varphi) = \mathcal{A}(\varphi) \times \mathcal{A}(\varphi)\) is a group homomorphism from \((\tilde{\mathcal{A}}(R), *_R)\) to \((\tilde{\mathcal{A}}(S), *_S)\). Thus \(\tilde{\mathcal{A}}\) is a group-valued functor whose underlying set-valued functor is representable. By Proposition \ref{pro:equivalent affine groups}, \(\tilde{\mathcal{A}}\) is an affine group. 

Next we prove that \((\tilde{\mathcal{A}}, m_*, e_*, \operatorname{inv}_*, B)\) is an affine RB group. As shown in the proof of Theorem~\ref{embed skew brace into RBG Thm}, the group
\[
\big(\tilde{\mathcal{A}}(R),*_R \big)=\big(\mathcal{A}(R), \circ_R\big) \ltimes \big(\mathcal{A}(R), \cdot_R\big)
\]
admits an exact factorization
\[
\tilde{\mathcal{A}}(R) = H_R *_R L_R, \quad H_R \cap L_R = \{(1_R, 1_R)\},
\]
with \(H_R = \{ (x, x) \mid x \in \mathcal{A}(R) \}\) and \(L_R = \{ (x, 1_R) \mid x \in \mathcal{A}(R) \}\). The unique decomposition is given by
\[
(x, y) = (y, y) *_R (y^{\circ(-1)} \circ_R x, 1_R).
\]
Thus, by Example \ref{prelim examples} (3), \(B_R(x, y) = (y^{\circ(-1)} \circ_R x, 1_R)^{*-1} = (x^{\circ(-1)} \circ_R y, 1_R)\) is a well-defined RB operator on the group $\big(\tilde{\mathcal{A}}(R),*_R \big)$. Therefore each \(\big(\tilde{\mathcal{A}}(R), *_R, B_R\big)\) is an RB group. 
Furthermore, the naturality of \(B\) follows from a direct computation: for any \(\varphi: R \to S\) and \((x, y) \in \tilde{\mathcal{A}}(R)\), we have
\[
\begin{aligned}
\tilde{\mathcal{A}}(\varphi)\big(B_R(x, y)\big) 
&= \tilde{\mathcal{A}}(\varphi)\big(x^{\circ(-1)} \circ_R y, 1_R \big) \\
&= \big( \mathcal{A}(\varphi)(x^{\circ(-1)} \circ_R y),\; 1_S \big) \\
&= \big( \mathcal{A}(\varphi)(x)^{\circ(-1)} \circ_S \mathcal{A}(\varphi)(y),\; 1_S \big) \\
&= B_S\big( \tilde{\mathcal{A}}(\varphi)(x, y) \big).
\end{aligned}
\]
Hence \((\tilde{\mathcal{A}}, m_*, e_*, \operatorname{inv}_*, B)\) is an affine RB group.
\end{proof}

\begin{theorem}
\label{thm:affine-skew-brace-embedding}
Every affine skew brace can be embedded into an affine RB group.
\end{theorem}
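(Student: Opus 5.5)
Given an affine skew brace $(\mathcal{A}, m_\bullet, m_\circ)$, I take the affine RB group $(\tilde{\mathcal{A}}, m_*, e_*, \operatorname{inv}_*, B)$ from Lemma~\ref{Lem affine skeW brace give affine RB group}. By Proposition~\ref{affine RB to affine skew brace prop} it yields the affine skew brace $(\tilde{\mathcal{A}}, m_*, m_*^B)$. I then define a natural transformation $\Psi:\mathcal{A}\to\tilde{\mathcal{A}}$ by $\Psi_R(x)=(1_R,x)$, mirroring the map $\psi(g)=(e,g)$ in the proof of Theorem~\ref{embed skew brace into RBG Thm}. The goal is to show that $\Psi$ is an embedding of affine skew braces $\mathcal{A}\to(\tilde{\mathcal{A}},m_*,m_*^B)$.

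The steps are as follows.

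First, naturality. For $\varphi:R\to S$ we have $\tilde{\mathcal{A}}(\varphi)(1_R,x)=(\mathcal{A}(\varphi)(1_R),\mathcal{A}(\varphi)(x))=(1_S,\mathcal{A}(\varphi)(x))$, because $\mathcal{A}(\varphi)$ is a group homomorphism and so preserves the unit. (Equivalently, the unit $e$ is a natural transformation, Proposition~\ref{affine skewbrace common unit}.) Viewed in coordinate rings, $\Psi$ is induced by the algebra map $\mathcal{O}(\mathcal{A})\otimes\mathcal{O}(\mathcal{A})\to\mathcal{O}(\mathcal{A})$, $a\otimes b\mapsto \varepsilon(a)b$, so it is a morphism of representable functors.

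Second, injectivity. Each $\Psi_R$ is clearly injective.

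Third, the homomorphism property, checked pointwise. For each $R$ I verify that $\Psi_R$ is a skew brace homomorphism $(\mathcal{A}(R),\cdot_R,\circ_R)\to(\tilde{\mathcal{A}}(R),*_R,\circ_{B_R})$. Since $\lambda_{1_R}=\mathrm{id}$,
\[
(1_R,x)*_R(1_R,y)=(1_R,\,x\cdot_R y),
\]
which is the first operation. For the second operation, $B_R(1_R,x)=(x,1_R)$, and so
\[
(1_R,x)\circ_{B_R}(1_R,y)=(1_R,x)*_R(x,1_R)*_R(1_R,y)*_R(x,1_R)^{*-1}.
\]
Expanding with the semidirect product formula (using $(x,1_R)^{*-1}=(x^{\circ(-1)},1_R)$), this gives $(1_R,\,x\cdot_R\lambda_x(y))=(1_R,\,x\circ_R y)$. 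This is exactly the set-theoretic verification behind Theorem~\ref{embed skew brace into RBG Thm}, which I may cite directly. Since the pointwise statements hold for all $R$ and $\Psi$ is natural, $\Psi$ is an injective homomorphism of affine skew braces, i.e.\ an embedding.

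The main obstacle is mild. The pointwise algebra is already covered by \cite{BG22}, so the work is in confirming that $\Psi$ is a genuine natural transformation landing in the skew brace structure induced by the affine RB group. In particular, the descendant operation $m_*^B$ must agree with $\circ_{B_R}$ on each $R$; this holds by construction in Proposition~\ref{descend affine RBG}. One could also note that $\operatorname{Im}\Psi$ is an affine sub-skew-brace, represented by the quotient $\mathcal{O}(\mathcal{A})$, but this is not needed for the notion of embedding as defined.
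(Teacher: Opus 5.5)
Your proposal is correct and follows the paper's proof: you take the affine RB group $(\tilde{\mathcal{A}}, m_*, B)$ from Lemma~\ref{Lem affine skeW brace give affine RB group}, define $\Psi_R(x)=(1_R,x)$, and verify naturality, injectivity and both homomorphism identities pointwise using the semidirect product formula. Your extra remark that $\Psi$ is induced on coordinate rings by $a\otimes b\mapsto\varepsilon(a)b$ is the same map the paper uses later in Proposition~\ref{new relation from Hopfcobrace to coRB}.
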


\begin{proof}
Let $(\mathcal{A},m_\bullet, m_\circ)$ be an affine skew brace, and let \((\tilde{\mathcal{A}}, m_*, e_*, \operatorname{inv}_*, B)\) be the affine RB group defined as in Lemma~\ref{Lem affine skeW brace give affine RB group}.
Define \(\Psi: \mathcal{A} \to \tilde{\mathcal{A}}\) by
\[
\Psi_R(g) = (1_R, g).
\]
Then we show \(\Psi\) is an embedding of affine skew braces 
\[
(\mathcal{A},m_\bullet, m_\circ) \to (\tilde{\mathcal{A}}, m_*, m_*^B),
\]
where the affine skew brace \((\tilde{\mathcal{A}}, m_*, m_*^B)\) is constructed by the affine RB group \((\tilde{\mathcal{A}}, m_*, e_*, \operatorname{inv}_*, B)\) given in Proposition \ref{affine RB to affine skew brace prop}, i.e., 
\[
(m_*^B)_R (x,y)=x *^B_R y=x *_R B_R(x) *_R y *_R B_R(x)^{*-1}.
\]

For any \(\varphi: R \to S\) and \(g \in \mathcal{A}(R)\),
\[
\tilde{\mathcal{A}}(\varphi)\big(\Psi_R(g)\big) = \tilde{\mathcal{A}}(\varphi)(1_R, g) = \big(1_S, \mathcal{A}(\varphi)(g)\big) = \Psi_S\big(\mathcal{A}(\varphi)(g)\big).
\]
Furthermore, if \(\Psi_R(g_1) = \Psi_R(g_2)\), then \((1_R, g_1) = (1_R, g_2)\), which implies \(g_1 = g_2\). Hence $\Psi$ is a natural transformation and $\Phi_R$ is injective for any $k$-algebra $R$.

It remains to prove that \(\Psi: (\mathcal{A},m_\bullet, m_\circ) \to (\tilde{\mathcal{A}}, m_*, m_*^B) \) is a homomorphism of affine skew braces. 
First, we have
\[
\Psi_R(x \cdot_R y) = (1_R, x \cdot_R y) = (1_R, x) *_R (1_R, y) = \Psi_R(x) *_R \Psi_R(y).
\]
Moreover, by $B_R(1_R, x) = (x, 1_R)$ and $B_R(1_R,x)^{*-1}=(x, 1_R)^{* -1} = (x^{\circ-1}, 1_R)$, we have
\[
\begin{aligned}
	\Psi_R(x) *_R^B \Psi_R(y)
	&= (1_R, x) *_R B_R(1_R, x) *_R (1_R, y) *_R B_R(1_R, x)^{*-1} \\
	&= (1_R, x) *_R (x, 1_R) *_R (1_R, y) *_R (x^{\circ-1}, 1_R) \\
	&= (x, x) *_R (1_R, y) *_R (x^{\circ -1}, 1_R) \\
	&= \big(x, x \cdot_R \lambda_x(y)\big) *_R (x^{\circ -1}, 1_R) \\
	&= (x, x \circ_R y) *_R (x^{\circ -1}, 1_R) \\
	&= (1_R, x \circ_R y) = \Psi_R(x \circ_R y).
\end{aligned}
\]
Hence  \(\Psi_R\) is a homomorphism of skew braces for any $k$-algebra $R$. Consequently, \(\Psi\) is an embedding of affine skew braces. 
\end{proof}

\subsection{Co-Rota--Baxter Hopf algebras and Hopf co-braces}

We observe that the way an affine RB group gives rise to an affine skew brace naturally yields the construction of a Hopf co-brace from a co-RB Hopf algebra. Conversely, the embedding of affine skew braces into affine RB groups leads to a new relation.

The proof of Proposition~\ref{coRBHopf give rise to Hopf cobrace} below, in contrast to earlier computational treatments, employs a categorical viewpoint and avoids lengthy computations.

\begin{proposition}{\rm (\cite{ZZL24})}
\label{coRBHopf give rise to Hopf cobrace}
    Let \((H, \mu, \eta, \Delta, \varepsilon, S, \mathcal{B})\) be a co-RB Hopf algebra and let
\[
H_{\mathcal{B}} = (H, \mu, \eta, \Delta_\mathcal{B}, \varepsilon, S_{\mathcal{B}})
\]
be its descendant Hopf algebra, where \(\Delta_B\) and \(S_B\) are defined as in Definition \ref{def descendant hopf alg}. Then the triple $(H, \Delta, \Delta_{\mathcal{B}})$ is a commutative Hopf co-brace.
\end{proposition}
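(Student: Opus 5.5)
The plan is to route the statement through the dualities already established, with no Sweedler computation. First, apply Theorem \ref{affineRBG RBHopf THM} to the co-RB Hopf algebra $(H,\mathcal{B})$. This gives the affine RB group $(G,m,e,\operatorname{inv},B):=(h^H,h^\Delta,h^{\varepsilon},h^{S},h^{\mathcal{B}})$. Its descendant affine group $(G,m^B,e,\operatorname{inv}^B)$ comes from Proposition \ref{descend affine RBG}. By the discussion following Definition \ref{def descendant hopf alg}, obtained by dualizing Diagrams \eqref{m^B diagram} and \eqref{inv^B diagram} via Yoneda, the natural transformations $m^B$ and $\operatorname{inv}^B$ correspond exactly to $\Delta_{\mathcal{B}}$ and $S_{\mathcal{B}}$. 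Hence $(h^H,h^{\Delta_{\mathcal{B}}})$ is the descendant affine group, and $H_{\mathcal{B}}$ is a commutative Hopf algebra with the same algebra structure $(\mu,\eta)$ and counit $\varepsilon$. The counit is shared because $e$ is the unit of both group structures.

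Next, invoke Proposition \ref{affine RB to affine skew brace prop}: $(G,m,m^B)$ is an affine skew brace. Theorem \ref{affine skewbrace Hopf co-brace} then says its coordinate ring $\mathcal{O}(G)=H$ is a commutative Hopf co-brace. The two comultiplications on this co-brace are the Yoneda duals of $m$ and $m^B$, namely $\Delta$ and $\Delta_{\mathcal{B}}$. So the co-brace produced is precisely $(H,\Delta,\Delta_{\mathcal{B}})$, with antipodes $S$ and $S_{\mathcal{B}}$.

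The main obstacle is identification rather than computation. One must check that the correspondence in Theorem \ref{affine skewbrace Hopf co-brace} sends $m_\bullet\mapsto\Delta_\bullet$ and $m_\circ\mapsto\Delta_\circ$ in the same way Remark \ref{Hom on affine group} sends $m\mapsto\Delta$, including the order of tensor factors. One must also check that the compatibility \eqref{hopfcobrace} is exactly the Yoneda dual of Diagram \eqref{skewbracediagram}. I would settle this as in the proof of Theorem \ref{affineRBG RBHopf THM}. The skew brace diagram \eqref{skewbracediagram} is equivalent, through the contravariant equivalence between $\mathsf{RepFunc}[\mathsf{Alg}_k,\mathsf{Set}]$ and $\mathsf{Alg}_k$, to a diagram of algebra maps $H\to H^{\otimes 3}$. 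In that diagram, $\langle \pi_1,\pi_2\rangle$ dualizes to $\iota_1\otimes\iota_2$, $\operatorname{inv}_\bullet$ dualizes to $S$, and the diagonal use of $\pi_1$ dualizes to multiplication in the first factor. Evaluating both composites on $a\in H$ gives the two sides of \eqref{hopfcobrace} with $\Delta_\bullet=\Delta$ and $\Delta_\circ=\Delta_{\mathcal{B}}$. Once the conventions agree, the statement follows directly.
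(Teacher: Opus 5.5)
Your proposal is correct and follows the paper's proof. You pass from $(H,\mathcal{B})$ and $H_{\mathcal B}$ to the affine RB group $(h^H,h^\Delta,h^{\mathcal B})$ and its descendant affine group, apply Proposition~\ref{affine RB to affine skew brace prop} to obtain the affine skew brace $(G,m,m^B)$, and conclude with Theorem~\ref{affine skewbrace Hopf co-brace}; your extra check that the co-brace compatibility is the Yoneda dual of the skew brace diagram with $\Delta_\bullet=\Delta$ and $\Delta_\circ=\Delta_{\mathcal B}$ (which does hold under the convention $(f\cdot g)(x)=f(x_1)g(x_2)$) is left implicit in the paper, but it is the right step for pinning down which comultiplications the resulting co-brace carries.
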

\begin{proof}
A co-RB Hopf algebra \((H, \mu, \eta, \Delta, \varepsilon, S, \mathcal{B})\) together with its descendant Hopf algebra \((H, \mu, \eta, \Delta_{\mathcal{B}}, \varepsilon, S_{\mathcal{B}})\) corresponds to an affine RB group \((G, m, e, \operatorname{inv}, B)\) together with its descendant affine group \((G, m^B, e, \operatorname{inv}^B)\). Then \((G, m, m^B)\) is an affine skew brace by Proposition~\ref{affine RB to affine skew brace prop}. Theorem~\ref{affine skewbrace Hopf co-brace} then shows that \((H, \Delta, \Delta_{\mathcal{B}})\) is a commutative Hopf co-brace.
\end{proof}

\begin{proposition}
\label{new relation from Hopfcobrace to coRB}
For every commutative Hopf co-brace \((H, \Delta_\bullet, \Delta_\circ)\), there exists a commutative Hopf co-brace \((H \otimes H, \widetilde{\Delta}_\bullet, \widetilde{\Delta}_\circ)\), arising from a co-RB Hopf algebra, together with a homomorphism of Hopf co-braces
\[
(H \otimes H, \widetilde{\Delta}_\bullet, \widetilde{\Delta}_\circ) \longrightarrow (H, \Delta_\bullet, \Delta_\circ), \qquad x \otimes y \longmapsto \varepsilon_H(x) y.
\]
\end{proposition}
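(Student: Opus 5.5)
The plan is to obtain the statement by dualizing Theorem~\ref{thm:affine-skew-brace-embedding}, using the anti-equivalences of Theorem~\ref{coRBHopf = AffRBgroup category equivalence} and Theorem~\ref{affine skewbrace Hopf co-brace}.

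\textbf{Step 1: pass to the affine side.} Start with a commutative Hopf co-brace $(H,\Delta_\bullet,\Delta_\circ)$. By Theorem~\ref{affine skewbrace Hopf co-brace}, $\mathcal{A}:=h^H$ is an affine skew brace. Here $m_\bullet=h^{\Delta_\bullet}$ and $m_\circ=h^{\Delta_\circ}$.

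By Proposition~\ref{affine skewbrace common unit}, the two units coincide. Dually, $\varepsilon_\bullet=\varepsilon_\circ$, and I write $\varepsilon_H$ for this common counit. Concretely, $1_R=\eta_R\circ\varepsilon_H$.

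\textbf{Step 2: build the co-RB Hopf algebra on $H\otimes H$.} Apply Lemma~\ref{Lem affine skeW brace give affine RB group} to get the affine RB group $(\tilde{\mathcal{A}},m_*,B)$. Its proof shows that $\tilde{\mathcal{A}}$ is represented by $H\otimes H$, via
\[
(f_1,f_2)\mapsto [f_1,f_2]=\mu_R\circ(f_1\otimes f_2).
\]
By Theorem~\ref{affineRBG RBHopf THM}, $\mathcal{O}(\tilde{\mathcal{A}})=H\otimes H$ carries:
\begin{itemize}
\item a comultiplication $\widetilde{\Delta}_\bullet$ dual to $m_*$;
\item a Rota--Baxter co-operator $\widetilde{\mathcal{B}}$ dual to $B$.
\end{itemize}
Set $\widetilde{\Delta}_\circ:=(\widetilde{\Delta}_\bullet)_{\widetilde{\mathcal{B}}}$, the descendant comultiplication of Definition~\ref{def descendant hopf alg}. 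Proposition~\ref{coRBHopf give rise to Hopf cobrace} then says that $(H\otimes H,\widetilde{\Delta}_\bullet,\widetilde{\Delta}_\circ)$ is a commutative Hopf co-brace arising from a co-RB Hopf algebra.

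This co-brace corresponds, under Theorem~\ref{affine skewbrace Hopf co-brace}, exactly to the affine skew brace $(\tilde{\mathcal{A}},m_*,m_*^B)$ of Proposition~\ref{affine RB to affine skew brace prop}. The reason is that the descendant comultiplication was defined as the Yoneda dual of $m^B$ in diagram~\eqref{Delta_B def}.

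\textbf{Step 3: dualize the embedding.} Theorem~\ref{thm:affine-skew-brace-embedding} gives a homomorphism of affine skew braces $\Psi:(\mathcal{A},m_\bullet,m_\circ)\to(\tilde{\mathcal{A}},m_*,m_*^B)$, with $\Psi_R(g)=(1_R,g)$. By the contravariant equivalence of Theorem~\ref{affine skewbrace Hopf co-brace}, $\Psi$ corresponds to a unique homomorphism of Hopf co-braces $\psi:H\otimes H\to H$.

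By Yoneda, $\psi=\Psi_H(\operatorname{id}_H)$, read as an algebra map $H\otimes H\to H$. Compute it:
\begin{align*}
\Psi_H(\operatorname{id}_H)&=(\eta_H\circ\varepsilon_H,\ \operatorname{id}_H),\\
\psi(x\otimes y)&=\mu_H\bigl(\eta_H\varepsilon_H(x)\otimes y\bigr)=\varepsilon_H(x)\,y.
\end{align*}
This is the stated map.

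\textbf{Main obstacle.} I expect the delicate point to be bookkeeping rather than anything conceptual. Two things must be checked:
\begin{itemize}
\item the identification $h^{H\otimes H}\cong h^H\times h^H$ is the one used to transport $m_*$, $B$, and $\Psi$ consistently;
\item the Hopf co-brace attached to $(\tilde{\mathcal{A}},m_*,m_*^B)$ by Theorem~\ref{affine skewbrace Hopf co-brace} literally equals $(H\otimes H,\widetilde{\Delta}_\bullet,\widetilde{\Delta}_\circ)$, not merely an isomorphic copy.
\end{itemize}
I would settle both by fixing the representing isomorphism once and noting that every structure map is defined through it by Yoneda. With that choice, each identification holds on the nose.
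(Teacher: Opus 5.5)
Your proposal is correct and follows the paper's proof essentially step for step: you pass to the affine skew brace $h^H$, use the embedding $\Psi$ of Theorem~\ref{thm:affine-skew-brace-embedding} into $(\tilde{\mathcal{A}},m_*,m_*^B)$, take the co-RB Hopf algebra on $H\otimes H$ and its descendant to get the co-brace, and dualize $\Psi$ via Theorem~\ref{affine skewbrace Hopf co-brace}. Your explicit Yoneda computation $\psi=\Psi_H(\operatorname{id}_H)$, which gives $x\otimes y\mapsto\varepsilon_H(x)y$, and your remark that the two counits coincide supply details the paper only asserts.
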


\begin{proof}
For any commutative Hopf co-brace \((H, \Delta_\bullet, \Delta_\circ)\), there exists an affine skew brace \((\mathcal{A}, m_\bullet, m_\circ)\) given in Theorem~\ref{affine skewbrace Hopf co-brace}. 

Theorem~\ref{thm:affine-skew-brace-embedding} provides a homomorphism of affine skew braces 
\[\Psi:(\mathcal{A}, m_\bullet, m_\circ) \to (\tilde{\mathcal{A}},m_*, m_*^B),
\]
which can be regarded as an embedding of affine skew brace \((\mathcal{A}, m_\bullet, m_\circ)\) into  the affine RB group \((\tilde{\mathcal{A}},m_*, B)\). Applying Theorem~\ref{affineRBG RBHopf THM} to \((\tilde{\mathcal{A}},m_*, B)\) yields a co-RB Hopf algebra 
\[(H\otimes H,\mu_{H\otimes H},\eta_{H\otimes H},\Delta_*, \varepsilon_*, S_*,\mathcal{B})
\] 
and then we obtain its descendant Hopf algebra
\[(H\otimes H,\mu_{H\otimes H},\eta_{H\otimes H},{\Delta_*}_{\mathcal{B}}, \varepsilon_*, {S_*}_{\mathcal{B}}).
\]
By Proposition \ref{coRBHopf give rise to Hopf cobrace},  $(H\otimes H, \Delta_*, {\Delta_*}_{\mathcal{B}})$ is a commutative Hopf co-brace. Moreover, by Theorem \ref{affine skewbrace Hopf co-brace}, the embedding $\Psi$
corresponds to a homomorphism of Hopf co-braces
\[
\psi: (H\otimes H, \Delta_*, {\Delta_*}_{\mathcal{B}}) \longrightarrow (H, \Delta_\bullet, \Delta_\circ) ,\quad x\otimes y \mapsto \varepsilon_H(x)y. \qedhere
\]
\end{proof}


\section{The Yang--Baxter equation in the setting of affine schemes}\label{sec:YB}
Recall that a set-theoretical solution to the Yang–Baxter equation is a pair \((X, r)\), where \(X\) is a set and
\[
r : X \times X \longrightarrow X \times X, \qquad r(x,y) = (\sigma_x(y), \tau_y(x)), \quad x,y \in X,
\]
is a bijective map such that
\begin{equation}
    (r \times \operatorname{id})(\operatorname{id} \times r)(r \times \operatorname{id}) = (\operatorname{id} \times r)(r \times \operatorname{id})(\operatorname{id} \times r).
\end{equation}
A solution \((X, r)\) is said to be \textbf{non-degenerate} if the maps \(\sigma_x\) and \(\tau_x\) are bijective for each \(x \in X\), and \((X, r)\) is said to be \textbf{involutive} if \(r^2 = \operatorname{id}_{X \times X}\).

Recall \cite[Theorem 3.1]{GV17} that every skew left brace $(A, \cdot, \circ)$ gives rise to a non-degenerate set-theoretical solution via the formula
\begin{equation}
    r(a,b) = (\lambda_a(b), \lambda_{\lambda_a(b)}^{-1}((a \circ b)^{-1} a (a \circ b))),
\end{equation}
where \( \lambda_a(b) = a^{-1} \cdot (a \circ b) \).  
Moreover, the solution is involutive if and only if the first group \( (A, \cdot) \) is abelian, i.e., when \( A \) is a brace.

We propose a generalisation of the set-theoretical solution to the setting of affine schemes: when $X$ is a representable functor from $\mathsf{Alg}_k$ to $\mathsf{Set}$, we require that $r: X \times X \to X \times X$ be a natural transformation. To make this precise, we give the following formal definition.

\begin{definition}
\label{def:affine-scheme-solution}
An \textbf{affine scheme solution} to the Yang--Baxter equation is a pair \((X, r)\) consisting of:
\begin{itemize}
    \item[\rm(1)] a representable functor \(X: \mathsf{Alg}_k \to \mathsf{Set}\) (i.e., an affine scheme);
    \item[\rm(2)] a natural transformation \(r: X \times X \to X \times X\)
\end{itemize}
such that, for any $R \in \mathsf{Alg}_k$, the map
\[
r_R: X(R) \times X(R) \longrightarrow X(R) \times X(R),  \quad r_R(x,y) = \big(\sigma_{R,x}(y), \tau_{R,y}(x)\big), \quad x,y \in X(R),
\]
is a set-theoretical solution to the Yang--Baxter equation.
\end{definition}


\begin{definition}
An affine scheme solution \((X, r)\) is called \textbf{non-degenerate} (resp. \textbf{involutive}) if for every \(R\), the maps \(\sigma_{R,x}, \tau_{R,x}: X(R) \to X(R)\) are bijections (resp. \(r \circ r = \operatorname{id}_{X \times X}\)).
\end{definition}

\begin{proposition}
\label{prop:affine-skew-brace-to-solution}
Let \((\mathcal{A}, m_\bullet, m_\circ)\) be an affine skew brace. For each \(R \in \mathsf{Alg}_k\) and \(a,b \in \mathcal{A}(R)\), define
\[
r_R(a,b) = \big(\lambda_a(b), \; \lambda_{\lambda_a(b)}^{-1}((a \circ_R b)^{-1} \cdot_R a \cdot_R (a \circ_R b))\big),
\]
where \(\lambda_a(b) = a^{-1} \cdot_R (a \circ_R b)\).  
Then the family \(\{r_R\}\) defines a natural transformation \(r: \mathcal{A} \times \mathcal{A} \to \mathcal{A} \times \mathcal{A}\).  
Consequently, \((\mathcal{A}, r)\) is a non-degenerate affine scheme solution to the Yang--Baxter equation. In particular,    \((\mathcal{A}, r)\) is involutive if and only if   \(\mathcal{A}\) is an affine brace.
\end{proposition}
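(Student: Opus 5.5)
The plan is to prove the statement pointwise in $R$ and then verify naturality. For a fixed $k$-algebra $R$, the triple $(\mathcal{A}(R),\cdot_R,\circ_R)$ is a skew brace by definition of an affine skew brace. Hence, by \cite[Theorem 3.1]{GV17}, the map $r_R$ is a non-degenerate set-theoretical solution to the Yang--Baxter equation. The same result says it is involutive if and only if $(\mathcal{A}(R),\cdot_R)$ is abelian. So once naturality of $r$ is established, both the non-degeneracy claim and the equivalence of Definition~\ref{def:affine-scheme-solution} follow at each $R$, and the argument reduces to two points: naturality, and the translation of the involutivity criterion.

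For naturality, fix $\varphi\colon R\to S$ and write $\phi=\mathcal{A}(\varphi)$. By Theorem~\ref{affine (skew) brace equivalent thm}(3), $\phi$ is a skew brace homomorphism $\mathcal{A}(R)\to\mathcal{A}(S)$. It therefore preserves $\cdot$ and $\circ$, together with their inverses and common unit; the unit statement uses Proposition~\ref{affine skewbrace common unit}. Consequently
\[
\phi(\lambda_a(b))=\phi(a)^{-1}\cdot_S(\phi(a)\circ_S\phi(b))=\lambda_{\phi(a)}(\phi(b)).
\]
The one point requiring a little care is $\lambda^{-1}$. I would use that $\lambda\colon(\mathcal{A}(R),\circ_R)\to\operatorname{Aut}(\mathcal{A}(R),\cdot_R)$ is a homomorphism (Proposition~\ref{lamda from skewbrace remark}), so that $\lambda_c^{-1}=\lambda_{c^{\circ(-1)}}$. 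Then
\[
\phi(\lambda_c^{-1}(d))=\lambda_{\phi(c)^{\circ(-1)}}(\phi(d))=\lambda_{\phi(c)}^{-1}(\phi(d)).
\]
Applying this with $c=\lambda_a(b)$ and $d=(a\circ b)^{-1}a(a\circ b)$, and using that $\phi$ commutes with products, inverses and conjugation, gives $(\phi\times\phi)\circ r_R=r_S\circ(\phi\times\phi)$. This verifies that $r$ is a natural transformation.

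For the final equivalence, first suppose $\mathcal{A}$ is an affine brace. Then each $(\mathcal{A}(R),\cdot_R)$ is abelian, so each $r_R$ is involutive. This gives $r\circ r=\operatorname{id}$ componentwise, hence as a natural transformation. Conversely, if $r\circ r=\operatorname{id}$, then each $r_R$ is involutive, so every $(\mathcal{A}(R),\cdot_R)$ is abelian. That is exactly the statement that $(\mathcal{A},m_\bullet)$ is an abelian group object, i.e.\ $m_\bullet\circ\tau=m_\bullet$ checked componentwise, which by the Yoneda lemma is the same as cocommutativity of $\Delta_\bullet$. Hence $\mathcal{A}$ is an affine brace.

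The only mildly delicate step is the compatibility of $\lambda^{-1}$ with $\phi$, which the identity $\lambda_c^{-1}=\lambda_{c^{\circ(-1)}}$ disposes of. Everything else is a formal consequence of the set-theoretic theorem together with the fact that naturality can be checked componentwise.
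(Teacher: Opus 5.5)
Your proposal is correct and follows the paper's proof: the set-theoretic solution, non-degeneracy and the involutivity criterion come pointwise from \cite[Theorem 3.1]{GV17}, and naturality follows from $\lambda_c^{-1}=\lambda_{c^{\circ(-1)}}$, which writes $r_R$ purely in terms of $\cdot_R$, $\circ_R$ and their inverses, all preserved by the skew brace homomorphism $\mathcal{A}(\varphi)$. Your componentwise treatment of the involutivity equivalence only makes explicit what the paper leaves implicit; the detour through cocommutativity of $\Delta_\bullet$ is unnecessary, since an affine brace is by definition one in which each $(\mathcal{A}(R),\cdot_R)$ is abelian.
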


\begin{proof}
Given an affine skew brace \((\mathcal{A}, m_\bullet, m_\circ)\). For each \(R\), the map \(r_R\) defines a non-degenerate set-theoretical solution to the Yang--Baxter equation by \cite{GV17}. And it is involutive exactly when $m_\bullet$ is abelian.

It remains to verify that \(r\) is a natural transformation. By Remark~\ref{lamda from skewbrace remark}, 
\[
\lambda: \bigl(\mathcal{A}(R), \circ_R\bigr) \longrightarrow \operatorname{Aut}\bigl(\mathcal{A}(R), \cdot_R\bigr)
\]
is a group homomorphism, which implies 
\[
\lambda_a^{-1}(b) = \lambda_{a^{\circ(-1)}}(b) = (a^{\circ(-1)})^{-1} \cdot_R (a^{\circ(-1)} \circ_R b).
\]
Thus the expression defining \(r_R(a,b)\) involves only the operations \(\cdot_R, \circ_R\) and their inverses. 
Let \(\varphi: R \to S\) be a morphism in \(\mathsf{Alg}_k\). Then $\mathcal{A}(\varphi): \mathcal{A}(R) \to \mathcal{A}(S)$ is a homomorphism of skew braces, which preserves all these operations. Consequently,
\[
(\mathcal{A}(\varphi) \times \mathcal{A}(\varphi)) \circ r_R = r_S \circ (\mathcal{A}(\varphi) \times \mathcal{A}(\varphi)),
\]
which implies that \(r\) is a natural transformation.
\end{proof}

By the relation between affine RB groups and affine skew braces, we have

\begin{corollary}
Let $(G,m,B)$ be an affine RB group. For each $R \in \mathsf{Alg}_k$ and $a,b \in G(R)$, define
\[
r_R(a,b) = \big(\lambda_a(b), \; \lambda_{\lambda_a(b)}^{-1}\big(\lambda_a(b)^{-1} \cdot_R a \cdot_R \lambda_a(b) \big)\big),
\]
where \(\lambda_a(b) = B_R(a) \cdot_R b \cdot_R B_R(a)\). 
Then \((G, r)\) is a non-degenerate affine scheme solution to the Yang--Baxter equation.
\end{corollary}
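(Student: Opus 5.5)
The plan is to compose the two results already established. By Proposition~\ref{affine RB to affine skew brace prop}, the affine RB group $(G,m,B)$ together with its descendant affine group gives an affine skew brace $(G,m,m^B)$. Here, for each $R\in\mathsf{Alg}_k$, the two operations are $a\cdot_R b=m_R(a,b)$ and $a\circ_R b=m^B_R(a,b)=a\,B_R(a)\,b\,B_R(a)^{-1}$. Applying Proposition~\ref{prop:affine-skew-brace-to-solution} to this affine skew brace then produces a natural transformation $r\colon G\times G\to G\times G$. It makes $(G,r)$ a non-degenerate affine scheme solution, with
\[
r_R(a,b)=\big(\lambda_a(b),\;\lambda^{-1}_{\lambda_a(b)}\big((a\circ_R b)^{-1}\cdot_R a\cdot_R(a\circ_R b)\big)\big).
\]
The remaining work is to check that this formula coincides with the one in the statement.

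First I compute $\lambda_a$ for the brace $(G(R),\cdot_R,\circ_R)$:
\[
\lambda_a(b)=a^{-1}\cdot_R(a\circ_R b)=a^{-1}\,a\,B_R(a)\,b\,B_R(a)^{-1}=B_R(a)\,b\,B_R(a)^{-1}.
\]
So $\lambda_a$ is conjugation by $B_R(a)$ in $(G(R),\cdot_R)$. The displayed definition in the statement should therefore be read with the last factor $B_R(a)^{-1}$; I will state this explicitly, since it is what the construction forces.

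Next I simplify the argument of $\lambda^{-1}_{\lambda_a(b)}$. Since $a\circ_R b=a\cdot_R\lambda_a(b)$, we get
\[
(a\circ_R b)^{-1}\,a\,(a\circ_R b)=\lambda_a(b)^{-1}\,a^{-1}\,a\,a\,\lambda_a(b)=\lambda_a(b)^{-1}\,a\,\lambda_a(b).
\]
This matches the second component in the statement. Here $\lambda^{-1}_c$ denotes the inverse of $\lambda_c$, namely conjugation by $B_R(c)^{-1}$.

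Non-degeneracy and naturality are then inherited directly from Proposition~\ref{prop:affine-skew-brace-to-solution}. The only real obstacle is the bookkeeping: verifying that the rewritten formula agrees with the brace formula for every $R$, including the inverse convention in $\lambda_a$. No further computation is needed.
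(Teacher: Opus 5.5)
Your proposal is correct and is exactly the paper's argument: compose Proposition~\ref{affine RB to affine skew brace prop} with Proposition~\ref{prop:affine-skew-brace-to-solution}, so that naturality and non-degeneracy are inherited. Your formula check goes beyond the paper's one-line proof and is warranted, because the printed $\lambda_a(b)=B_R(a)\,b\,B_R(a)$ is a typo for $B_R(a)\,b\,B_R(a)^{-1}$; read literally, already for $\mathbb{G}$ with $B_R(x)=x^{-1}$ it gives $r_R(a,b)=(ba^{-2},\,b^{2}a^{-3})$, which violates the braid relation, and your reduction $(a\circ_R b)^{-1}a(a\circ_R b)=\lambda_a(b)^{-1}a\,\lambda_a(b)$ is right.
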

\begin{proof}
 First, by Proposition~\ref{affine RB to affine skew brace prop}, $(G,m,m^B)$ is an affine skew brace. Applying Proposition~\ref{prop:affine-skew-brace-to-solution} to it then yields the corollary.
\end{proof}

\begin{remark}
In light of Proposition~\ref{prop:affine-skew-brace-to-solution}, affine skew braces yield affine scheme solutions of the Yang–Baxter equation. Affine RB groups do so as well, via their relation to affine skew braces. Similarly, co-RB Hopf algebras and Hopf co-braces also admit such solutions, though we do not spell out the details here.
\end{remark}

$\mathbf{Acknowledgements.}$ We are deeply grateful to Andrey Lazarev, Xiaomeng Xu, and Honglei Lang for their constructive comments and useful discussions. This research is supported by NSFC (W2412041,12371029) and the National Key Research and Development Program of China (2021YFA1002000).

\end{document}